\renewcommand{\algocf@captiontext}[2]{#1\algocf@typo. \AlCapFnt{}#2} 
\def\@algocf@capt@plain{top}
\renewcommand{\algocf@makecaption}[2]{%
  \addtolength{\hsize}{\algomargin}%
  \sbox\@tempboxa{\algocf@captiontext{#1}{#2}}%
  \ifdim\wd\@tempboxa >\hsize
    \hskip .5\algomargin%
    \parbox[t]{\hsize}{\algocf@captiontext{#1}{#2}}
  \else%
    \global\@minipagefalse%
    \hbox to\hsize{\box\@tempboxa}
  \fi%
  \addtolength{\hsize}{-\algomargin}%
}
\def\Bka{{\it Biometrika}}
\def\ind{\begin{picture}(9,8)
         \put(0,0){\line(1,0){9}}
         \put(3,0){\line(0,1){8}}
         \put(6,0){\line(0,1){8}}
         \end{picture}
        }
\def\nind{\begin{picture}(9,8)
         \put(0,0){\line(1,0){9}}
         \put(3,0){\line(0,1){8}}
         \put(6,0){\line(0,1){8}}
         \put(1,0){{\it /}}
         \end{picture}
    }
\def\RD{\textsc{RD}}
\def\RR{\textsc{RR}}
\def\OR{\textsc{OR}}
\def\RRtrue{\textsc{RR}^{\textnormal{true}}}
\def\RRadj{\textsc{RR}^{\textnormal{adj}}}
\def\RRunadj{\textsc{RR}^{\textnormal{unadj}}}
\def\DCE{\textsc{DCE}}
\def\pr{\textup{pr}}
\def\E{E}
\def\true{\text{true}}
\def\FACE{\textsc{ACE}^{\textnormal{unadj}}}
\def\FDCE{\textsc{DCE}^{\textnormal{unadj}}}
\def\ACE{\textsc{ACE}}
\def\adj{{\textnormal{adj}}}
\def\true{{\textnormal{true}}}
\def\cov{\textnormal{cov}}
\renewcommand{\d}[1]{\ensuremath{\operatorname{d}\!{#1}}}
\begin{document}

%

\markboth{P. Ding, T. J. VanderWeele \and J. M. Robins}{Instrumental variables as bias amplifiers}

\title{Instrumental variables as bias amplifiers with general outcome and confounding}

\author{P. Ding}
\affil{Department of Statistics, University of California, Berkeley, California, USA.
\email{pengdingpku@berkeley.edu} } 

\author{T. J. VanderWeele \and J. M. Robins}
\affil{Departments of Epidemiology and Biostatistics, Harvard T. H. Chan School of Public Health, Boston, Massachusetts, USA. \email{tvanderw@hsph.harvard.edu} \email{robins@hsph.harvard.edu}  }

\maketitle

\begin{abstract}
Drawing causal inference with observational studies is the central pillar of many disciplines. One sufficient condition for identifying the causal effect is that the treatment-outcome relationship is unconfounded conditional on the observed covariates. It is often believed that the more covariates we condition on, the more plausible this unconfoundedness assumption is. This belief has had a huge impact on practical causal inference, suggesting that we should adjust for all pretreatment covariates. However, when there is unmeasured confounding between the treatment and outcome, estimators adjusting for some pretreatment covariate might have greater bias than estimators without adjusting for this covariate. This kind of covariate is called a bias amplifier, and includes instrumental variables that are independent of the confounder, and affect the outcome only through the treatment. Previously, theoretical results for this phenomenon have been established only for linear models. We fill in this gap in the literature by providing a general theory, showing that this phenomenon happens under a wide class of models satisfying certain monotonicity assumptions. We further show that when the treatment follows an additive or multiplicative model conditional on the instrumental variable and the confounder, these monotonicity assumptions can be interpreted as the signs of the arrows of the causal diagrams.

\end{abstract}

\begin{keywords}
Causal inference;
Directed acyclic graph; Interaction; Monotonicity; Potential outcome 
\end{keywords}

\section{Introduction}

Causal inference from observational data is an important but challenging problem for empirical studies in many disciplines. Under the potential outcomes framework \citep{Neyman::1923, Rubin::1974}, the causal effects are defined as comparisons between the potential outcomes under treatment and control, averaged over a certain population of interest. One sufficient condition for nonparametric identification of the causal effects is the ignorability condition \citep{Rosenbaum::1983}, that the treatment is conditionally independent of the potential outcomes given those pretreatment covariates that confound the relationship between the treatment and outcome. To make this fundamental assumption as plausible as possible, many researchers suggest that the set of collected pretreatment covariates should be as rich as possible. It is often believed that ``typically, the more conditional an assumption, the more generally acceptable it is'' \citep{rubin2009should}, and therefore ``in principle, there is little or no reason to avoid adjustment for a true covariate, a variable describing subjects before treatment'' \citep[][pp. 76]{rosenbaum2002observational}.

Simply adjusting for all pretreatment covariates \citep{d1998tutorial, rosenbaum2002observational, hirano2001estimation}, or the pretreatment criterion \citep{vanderweele2011new}, has a sound justification from the view point of design and analysis of randomized experiments. \citet{cochran1965planning}, citing \citet{dorn1953philosophy}, suggested that the planner of an observational study should always ask himself the question, ``How would the study be conducted if it were possible to do it by controlled experimentation?'' Following this classical wisdom, \citet{rubin2007design, rubin2008author, rubin2008objective, rubin2009should} argued that the design of observational studies should be in parallel with the design of randomized experiments, i.e., because we balance all pretreatment covariates in randomized experiments, we should also follow this pretreatment criterion and balance or adjust for all pretreatment covariates when designing observational studies.


However, this pretreatment criterion can result in increased bias under certain data generating processes.
We highlight two important classes of such data generating processes for which the pretreatment criterion may be problematic.
The first class is captured by an example of \citet{greenland1986identifiability}, in which conditioning on a pretreatment covariate invalidates the ignorability assumption and thus a conditional analysis is biased; yet the ignorability assumption holds unconditionally, so an analysis that ignores the covariate is unbiased. 
Several researchers have shown that this phenomenon is generic when the data are generated under the causal diagram in Figure \ref{fig:Mbias}. In this diagram, the ignorability assumption holds unconditionally but not conditionally \citep{Pearl2000, spirtes2000causation, Greenland::2003, Pearl2009a, Shrier2008, Shrier2009, Sjolander2009, ding2015adjust}. In Figure \ref{fig:Mbias}, a pretreatment covariate $M$ is associated with two independent unmeasured covariates $U$ and $U'$, but $M$ does not itself affect either the treatment $A$ or outcome $Y$. Because the corresponding causal diagram looks like the English letter M, this phenomenon is called M-Bias.

\begin{figure}[!t]
\centering
\begin{subfigure}[b]{0.42\linewidth}
$$
\begin{xy}
\xymatrix{
U\ar[dr] \ar[dd] & & U' \ar[dl] \ar[dd] \\
&M& \\
A\ar[rr]&& Y
}
\end{xy}
$$
\caption{Directed Acyclic Graph for M-Bias. $U$ and $U'$ are unobserved, and $M$ is observed.}
\label{fig:Mbias}
\end{subfigure} 
\quad \quad 
\begin{subfigure}[b]{0.42\linewidth}
$$
\begin{xy}
\xymatrix{
&&&&&\\
& & & U \ar[dl] \ar[dr] \\
Z \ar[rr]  & & A\ar[rr]  & & Y}
\end{xy}
$$
\caption{Directed Acyclic Graph for Z-Bias. $U$ is an unmeasured confounder and $Z$ is an instrumental variable for the treatment-outcome relationship.}
\label{fg::DAG}
\end{subfigure} 
\caption{Two Directed Acyclic Graphs. $A$ is the treatment, and $Y$ is the outcome of interest.}
\label{fig:two-eg}
\end{figure}
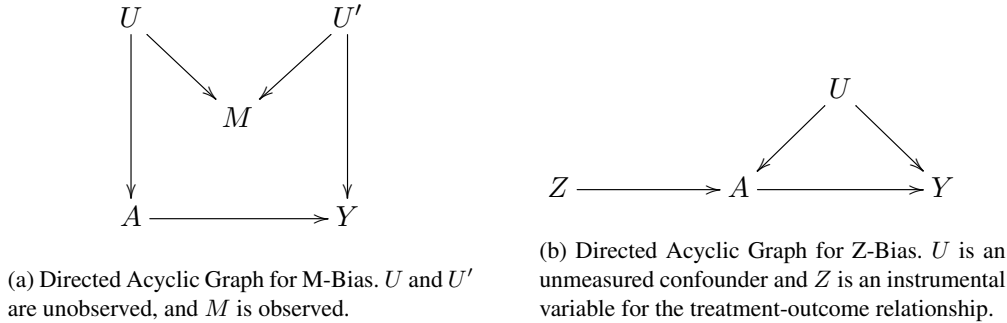


The second class of processes, which constitute the subject of this paper, are represented by the causal diagram in Figure \ref{fg::DAG}. Owing to confounding by the unmeasured common cause $U$ of the treatment $A$ and the outcome $Y$, both the analysis that adjusts and the analysis that fails to adjust for pretreatment measured covariates are biased. If the magnitude of the bias is larger when we adjust for a particular pretreatment covariate than when we do not, we refer to the covariate as a bias amplifier. Of particular interest is to determine the conditions under which an instrumental variable is a bias amplifier.
An instrumental variables is a pretreamtnet covariate that is independent of the confounder $U$ and has no direct effect on the outcome except through its effect on the treatment. The variable $Z$ in Figure \ref{fg::DAG} is an example.
\citet{heckman2004using} and \citet{bhattacharya2007instrumental} showed numerically that when the treatment and outcome are confounded, adjusting for an instrumental variable can result in greater bias than the unadjusted estimator. Wooldridge theoretically demonstrated this in linear models in a technical report in 2006, which was finally published as \citet{wooldridge2009should}. Because instrumental variables are often denoted by $Z$ as in Figure \ref{fg::DAG}, this phenomenon is called Z-Bias.

The treatment assignment is a function of the instrumental variable, the unmeasured confounder and some other independent random error, which are the three sources of variation of the treatment. If we adjust for the instrumental variable, the treatment variation is driven more by the unmeasured confounder, which could result in increased bias due to this confounder. Seemingly paradoxically, without adjusting for the instrumental variable, the observational study is more like a randomized experiment, and the bias due to confounding is smaller.
Although applied researchers \citep{myers2011effects, walker2013matching, brooks2013squeezing, ali2014propensity} have confirmed through extensive simulation studies that this bias amplification phenomenon exists in a wide range of reasonable models, definite theoretical results have been established only for linear models. We fill in this gap in the literature by showing that adjusting for an instrumental variable amplifies bias for estimating causal effects under a wide class of models satisfying certain monotonicity assumptions. When the instrumental variable and the confounder have either no additive or no multiplicative interaction on the treatment, these  assumptions can be interpreted as the signs of the arrows of the causal diagram \citep{vanderweele2010signed}. 
However, we also show that there exist data generating processes under which an instrumental variable is not a bias amplifier.

\section{Framework and Notation}
\label{sec::notation}

%

%
%
%

We consider a binary treatment $A$, an instrumental variable $Z$, an unobserved confounder $U,$ and an outcome $Y$, with the joint distribution depicted by the causal diagram in Figure \ref{fg::DAG}. Let $  \ind $ denote conditional independence between random variables. Then the instrumental variable $Z$ in Figure \ref{fg::DAG} satisfies $Z\ind U$, $Z\ind Y\mid (A,U)$ and $Z\nind A.$ 
We first discuss analysis conditional on observed pretreatment covariates $X$, and comment on averaging over $X$ in \S \ref{sec::discussion} and the Supplementary Material. 
We define the potential outcomes of $Y$ under treatment $a$ as $Y(a)$, $(a=1,0).$ The true average causal effect of $A$ on $Y$ for the population actually treated is
$$
\ACE_1^\true=\E\{  Y(1) \mid A=1\} - \E\{  Y(0) \mid A=1\},
$$
for the population who are actually in the control condition it is
$$
\ACE_0^\true=\E\{  Y(1) \mid A=0 \} - \E\{  Y(0) \mid A=0 \},
$$
and for the whole population it is
$$
\ACE^\true = \E\{  Y(1) \} - \E\{  Y(0) \} .
$$

Define $m_a(u) = \E(Y\mid A=a,U=u)$ to be the conditional mean of the outcome given the treatment and confounder. As illustrated by Figure \ref{fg::DAG}, because $U$ suffices to control confounding between $A$ and $Y$, the ignorability assumption $A\ind Y(a)\mid U$ holds for $a=0$ and $1$. Therefore, according to $Y=AY(1)+(1-A)Y(0)$, we have
\begin{eqnarray*}
\ACE_1^\true &=& \E(Y\mid A=1) - \int m_0(u) F(\d u\mid A=1), \\
\ACE_0^\true &=& \int m_1(u) F(\d u\mid A=0) - \E(Y\mid A=0), \\
\ACE^\true &=& \int m_1(u) F(\d u) - \int m_0(u) F(\d u).
\end{eqnarray*}

The unadjusted estimator is the naive comparison between the treatment and control means
$$
\FACE = \E(Y\mid A=1) - \E(Y\mid A=0).
$$

Define $\mu_a(z) = \E(Y\mid A=a, Z=z)$ as the conditional mean of the outcome given the treatment and instrumental variable.
Because the instrumental variable $Z$ is also a pretreatment covariate unaffected by the treatment, the usual strategy to adjust for all pretreatment covariates suggests using the adjusted estimator for the population under treatment
$$
\ACE_1^\adj = \E(Y\mid A=1) - \int   \mu_0(z) F(\d z\mid A=1),
$$
for the population under control
$$
\ACE_0^\adj = \int  \mu_1(z) F(\d z \mid A=0) - \E(Y\mid A=0),
$$
and for the whole population
$$
\ACE^\adj = \int  \mu_1(z) F(\d z)  -  \int   \mu_0(z) F(\d z) .
$$

Surprisingly, for linear structural equation models on $(Z,U,A,Y)$, previous theory demonstrated that the magnitudes of the biases of the adjusted estimators are no smaller than the unadjusted ones \citep{pearl2010class, pearl2011invited, pearl2013linear, wooldridge2009should}. The goal of the rest of our paper is to show that this phenomenon exists in more general scenarios.

\section{Scalar Instrumental Variable and Scalar Confounder}
\label{sec::scalar}

We first give a theorem for a scalar instrumental variable $Z$ and a scalar confounder $U.$

\begin{theorem}
\label{thm::Zbias-scalar-case-general}
In the causal diagram of Figure \ref{fg::DAG} with scalar $Z$ and $U$, if
\begin{enumerate}
[(a)]
\item
$\pr(A=1\mid Z=z)$ is non-decreasing in $z$, $\pr(A=1\mid U=u)$ is non-decreasing in $u$, and $\E(Y\mid A=a, U=u)$ is non-decreasing in $u$ for both $a=0$ and $1$;

\item
$\E(Y\mid A=a, Z=z)$ is non-increasing in $z$ for both $a=0$ and $1$,
\end{enumerate}
then
\begin{eqnarray}
\label{eq::result}
\begin{pmatrix}
\ACE_1^\adj \\
\ACE_0^\adj \\
\ACE^\adj
\end{pmatrix}
\geq 
\begin{pmatrix}
 \FACE \\
 \FACE \\
 \FACE
\end{pmatrix}
 \geq 
\begin{pmatrix}
\ACE_1^\true\\
\ACE_0^\true\\
\ACE^\true
\end{pmatrix}.
\end{eqnarray}
\end{theorem}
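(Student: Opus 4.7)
The plan is to derive all six inequalities in \eqref{eq::result} by the same mechanism: each comparison reduces to integrating a monotone regression function against a conditional distribution of $U$ or $Z$ given $A$, and these conditional distributions are stochastically ordered under condition (a).

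The key preliminary is a stochastic dominance lemma. For any scalar $V$ such that $\pr(A=1 \mid V=v)$ is non-decreasing in $v$, Bayes' rule gives
$$\frac{F(dv \mid A=1)}{F(dv \mid A=0)} = \frac{\pr(A=1 \mid V=v)\,\pr(A=0)}{\pr(A=0 \mid V=v)\,\pr(A=1)},$$
which is non-decreasing in $v$. This monotone likelihood ratio implies $F(\cdot \mid A=1) \succeq F(\cdot \mid A=0)$ in the usual stochastic order, and since the marginal $F(\cdot)$ is a mixture of the two, it is sandwiched between them. Condition (a) delivers this ordering for both $V=U$ and $V=Z$. I would then use repeatedly the standard fact that $\int g \, dF_1 \geq \int g \, dF_2$ whenever $g$ is non-decreasing and $F_1 \succeq F_2$, with the inequality reversed if $g$ is non-increasing.

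For the lower triple $\FACE \geq \ACE_\cdot^\true$, the tower property gives $\E(Y \mid A=a) = \int m_a(u) F(du \mid A=a)$, and direct algebra using the expressions for $\ACE_\cdot^\true$ displayed above yields
\begin{eqnarray*}
\FACE - \ACE_1^\true &=& \int m_0(u)\{F(du \mid A=1) - F(du \mid A=0)\}, \\
\FACE - \ACE_0^\true &=& \int m_1(u)\{F(du \mid A=1) - F(du \mid A=0)\}, \\
\FACE - \ACE^\true &=& \int m_1(u)\{F(du \mid A=1) - F(du)\} - \int m_0(u)\{F(du \mid A=0) - F(du)\}.
\end{eqnarray*}
Each is non-negative because $m_0, m_1$ are non-decreasing in $u$ by condition (a) and $F(\cdot \mid A=1) \succeq F(\cdot) \succeq F(\cdot \mid A=0)$.

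For the upper triple $\ACE_\cdot^\adj \geq \FACE$, parallel algebra with $z$, $\mu_a$ and $F(dz \mid \cdot)$ in place of $u$, $m_a$ and $F(du \mid \cdot)$ gives, for example, $\ACE_1^\adj - \FACE = \int \mu_0(z)\{F(dz \mid A=0) - F(dz \mid A=1)\}$, which is non-negative because $\mu_0$ is non-increasing in $z$ by condition (b) and $F(\cdot \mid A=1) \succeq F(\cdot \mid A=0)$ in $z$ by condition (a); the other two follow the same pattern. The main obstacle is just bookkeeping: condition (a) supplies non-decreasing regression functions in $u$ for the lower triple while condition (b) supplies non-increasing ones in $z$ for the upper triple, and one must verify in each case that the sign of the regression function combines with the direction of the stochastic ordering to yield the desired comparison. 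No further structural assumption, such as linearity or additivity, enters.
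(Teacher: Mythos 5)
Your proof is correct, and it reaches the result by a somewhat different route than the paper. The paper splits the two halves of \eqref{eq::result} across different tools: the lower triple $\FACE \geq \ACE_\cdot^\true$ is obtained by citing sign-rule lemmas of VanderWeele and Chiba (Lemmas \ref{lemma::vw2008}--\ref{lemma::chiba2009-2} in the Supplementary Material) as black boxes, while the upper triple is obtained from an exact covariance identity, $\ACE_1^\adj - \FACE = -\cov\{\Pi(Z),\mu_0(Z)\}/\{f(1-f)\}$ and its analogues (Lemma \ref{lemma::adjust-formula}), whose sign is then controlled by the Chebyshev/Esary--Proschan--Walkup association inequality for comonotone functions of a scalar. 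You instead handle all six inequalities with a single self-contained mechanism: monotone $\pr(A=1\mid V=v)$ gives a monotone likelihood ratio between $F(\cdot\mid A=1)$ and $F(\cdot\mid A=0)$, hence first-order stochastic dominance with the marginal sandwiched in between, and each difference of estimands is an integral of a monotone regression function against a signed difference of these stochastically ordered laws. Your integral representations are algebraically identical to the paper's covariance formulas (e.g.\ $\int \mu_0(z)\{F(\d z\mid A=0)-F(\d z\mid A=1)\} = -\cov\{\Pi(Z),\mu_0(Z)\}/\{f(1-f)\}$), so the two arguments differ only in which classical inequality certifies the sign --- stochastic dominance versus positive association of monotone functions --- and in that yours also proves the cited lemmas from scratch. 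What the paper's packaging buys is reusability: the covariance identity and Lemma \ref{lemma::epw1967} carry over verbatim to the vector and propensity-score settings of \S\ref{sec::general}, where a scalar stochastic-dominance argument would need the multivariate total positivity machinery. One small point of care in your version: the likelihood-ratio quotient should be read as a Radon--Nikodym derivative (or handled by showing $\pr(V>v\mid A=1)\geq\pr(V>v)\geq\pr(V>v\mid A=0)$ directly) when $\pr(A=1\mid V=v)$ hits $0$ or $1$; this does not affect the conclusion.
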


Inequalities among vectors as in \eqref{eq::result} should be interpreted as component-wise relationships. 
Intuitively, the monotonicity in Condition (a) of Theorem \ref{thm::Zbias-scalar-case-general} requires non-negative dependence structures on arrows $Z\rightarrow A$, $U\rightarrow A$ and $U\rightarrow Y$ in the causal diagram of Figure \ref{fg::DAG}. Because the dependence is in expectation, Condition (a) of Theorem \ref{thm::Zbias-scalar-case-general} is weaker than the requirement of signed directed acyclic graphs \citep{vanderweele2010signed}.

The monotonicity in Condition (b) of Theorem \ref{thm::Zbias-scalar-case-general} reflects the collider bias caused by conditioning on $A$. As noted by \citet{Greenland::2003}, in many cases, if $Z$ and $U$ affect $A$ in the same direction, then the collider bias caused by conditioning on $A$ is often in the opposite direction. Lemmas \ref{lemma::interactions}--\ref{lemma::inter-multi-general} in the Supplementary Material show that, if $Z$ and $U$ are independent and have non-negative additive or multiplicative effects on $A$, then conditioning on $A$ results in negative association between $Z$ and $U$. This negative collider bias, coupled with the positive association between $U$ and $Y$, further implies negative association between $Z$ and $Y$ conditional on $A$ as stated in Condition (b) of Theorem \ref{thm::Zbias-scalar-case-general}.

For easy interpretation, we will give sufficient conditions for Z-Bias which require no interaction of $Z$ and $U$ on $A.$ 
When $A$ given $Z$ and $U$ follows an additive model, we have the following theorem.

\begin{theorem}
\label{thm::Zbias-scalar-case}
In the causal diagram of Figure \ref{fg::DAG} with scalar $Z$ and $U$, \eqref{eq::result} holds if
\begin{enumerate}
[(a)]
\item
$\pr(A=1\mid Z=z, U=u) = \beta(z) + \gamma(u)$;

\item
$\beta(z)$ is non-decreasing in $z$, $\gamma(u)$ is non-decreasing in $u$, and $\E(Y\mid A=a, U=u)$ is non-decreasing in $u$ for both $a=1$ and $0$;

\item
the essential supremum of $U$ given $(A=a,Z=z)$ depends only on $a$.
\end{enumerate}
\end{theorem}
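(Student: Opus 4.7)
The plan is to reduce to Theorem \ref{thm::Zbias-scalar-case-general} by showing that conditions (a)--(c) here imply the two monotonicity conditions of that theorem. For condition (a) of Theorem \ref{thm::Zbias-scalar-case-general}, I use the additive model and $Z\ind U$ to marginalize: $\pr(A=1\mid Z=z)=\beta(z)+\E\{\gamma(U)\}$ and $\pr(A=1\mid U=u)=\E\{\beta(Z)\}+\gamma(u)$, both non-decreasing by condition (b) of the present theorem. The monotonicity of $\E(Y\mid A=a,U=u)$ in $u$ is assumed directly in (b), completing the translation.

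The real work is verifying condition (b) of Theorem \ref{thm::Zbias-scalar-case-general}, namely that $\mu_a(z)=\E(Y\mid A=a,Z=z)$ is non-increasing in $z$. Because $Z$ is an instrument, $Z\ind Y\mid(A,U)$, so $\mu_a(z)=\int m_a(u)\,F(\d u\mid A=a,Z=z)$. Bayes' rule combined with $Z\ind U$ gives
\[
F(\d u\mid A=a,Z=z)\;\propto\;\pr(A=a\mid Z=z,U=u)\,F(\d u),
\]
and for $z_1<z_2$ I compare the two conditional distributions of $U$ via their likelihood ratio. For $a=1$ the ratio is proportional to $\{\beta(z_2)+\gamma(u)\}/\{\beta(z_1)+\gamma(u)\}=1+\{\beta(z_2)-\beta(z_1)\}/\{\beta(z_1)+\gamma(u)\}$, and for $a=0$ it is proportional to $\{1-\beta(z_2)-\gamma(u)\}/\{1-\beta(z_1)-\gamma(u)\}=1-\{\beta(z_2)-\beta(z_1)\}/\{1-\beta(z_1)-\gamma(u)\}$. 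Both are non-increasing in $u$, since $\gamma$ is non-decreasing and $\beta(z_2)\geq\beta(z_1)$. By the monotone likelihood ratio property, $U\mid A=a,Z=z_2$ is stochastically smaller than $U\mid A=a,Z=z_1$, and because $m_a$ is non-decreasing, $\mu_a(z_2)\leq\mu_a(z_1)$.

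The delicate point, and where condition (c) earns its keep, is the passage from monotone likelihood ratio to stochastic order. In the $a=0$ case, the support of $U\mid A=0,Z=z$ is determined by $1-\beta(z)-\gamma(u)>0$, whose upper cutoff $\gamma^{-1}\{1-\beta(z)\}$ a priori shrinks as $z$ grows; condition (c) precisely rules this out by making the essential supremum constant in $z$, so the conditional supports of $U$ agree at the top and the MLR implication is unambiguous. With both conditions of Theorem \ref{thm::Zbias-scalar-case-general} now verified, \eqref{eq::result} follows immediately by appeal to that theorem.
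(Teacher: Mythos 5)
Your proof is correct, and the central step is carried out by a genuinely different argument from the paper's. Both proofs begin identically, by marginalizing the additive model over the independent $Z$ and $U$ to verify condition (a) of Theorem \ref{thm::Zbias-scalar-case-general}, so everything hinges on showing $\mu_a(z)$ is non-increasing in $z$. The paper gets there by dichotomizing $U$ at an arbitrary threshold, applying a $2\times 2$-table interaction inequality (Lemma \ref{lemma::interactions}) to deduce Lemma \ref{lemma::general-interaction}, i.e., $\partial F(u\mid A=a,Z=z)/\partial z\geq 0$, and then converting this distributional monotonicity into monotonicity of $\E(Y\mid A=a,Z=z)$ by integration by parts, with condition (c) used precisely to make the boundary term $m_a\{\overline{u}(a)\}$ free of $z$. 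You instead compute the Radon--Nikodym derivative of $F(\d u\mid A=a,Z=z_2)$ against $F(\d u\mid A=a,Z=z_1)$ directly via Bayes' rule and show it is non-increasing in $u$ for both $a=1$ and $a=0$; the likelihood-ratio order is strictly stronger than the stochastic (distributional) order the paper derives, and the passage from stochastic dominance to ordered means of the non-decreasing $m_a$ is then standard, with no integration by parts needed. Your route is shorter and arguably cleaner here; the paper's modular route pays off elsewhere, since Lemmas \ref{lemma::interactions}--\ref{lemma::inter-multi-general} are reused verbatim for the multiplicative case and the binary corollaries. One minor point: the role you assign to condition (c) is not quite the paper's, and in fact your argument does not really need it --- for $a=0$ the support of $U$ given $(A=0,Z=z)$ can only shrink at the top as $z$ grows, where the likelihood ratio vanishes, and for $a=1$ it can only extend at the bottom, where the ratio diverges; both are consistent with a non-increasing ratio, so the monotone-likelihood-ratio implication of stochastic dominance holds regardless. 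This over-caution is harmless and does not affect the validity of the proof.
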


In summary, when $A$ given $Z$ and $U$ follows an additive model and monotonicity of Theorem \ref{thm::Zbias-scalar-case} holds, both unadjusted and adjusted estimators have non-negative biases for the true average causal effects for the treatment, control and the whole populations. Furthermore, the adjusted estimators, either for the treatment, control or the whole populations, have larger biases than the unadjusted estimator, i.e., Z-Bias arises.

When both the instrumental variable $Z$ and the confounder $U$ are binary, Theorem \ref{thm::Zbias-scalar-case} has an even more interpretable form. Define $p_{zu} = \pr(A=1\mid Z=z, U=u) $ for $z,u=0$ and $1.$

\begin{corollary}
\label{thm::Zbias-binary-case}
In the causal diagram of Figure \ref{fg::DAG} with binary $Z$ and $U$, \eqref{eq::result} holds if
\begin{enumerate}
[(a)]
\item 
there is no additive interaction of $Z$ and $U$ on $A$, i.e., $p_{11}-p_{10}-p_{01}+p_{00}=0;$

\item
$Z$ and $U$ have monotonic effects on $A$, i.e., $p_{11}\geq \max(p_{10}, p_{01})$ and $\min(p_{10}, p_{01}) \geq p_{00}$, and $\E(Y\mid A=a,U=1)\geq \E(Y\mid A=a,U=0)$ for both $a=1$ and $0.$
\end{enumerate}
\end{corollary}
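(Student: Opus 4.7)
The plan is to derive this corollary as a direct specialization of Theorem \ref{thm::Zbias-scalar-case} by verifying, under the binary hypothesis, that each of conditions (a)--(c) of that theorem holds; then \eqref{eq::result} follows at once.

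First I would construct an additive representation of $\pr(A=1\mid Z,U)$. With $Z,U\in\{0,1\}$, set $\beta(0)=0$, $\beta(1)=p_{10}-p_{00}$, $\gamma(0)=p_{00}$, and $\gamma(1)=p_{01}$. Three of the four cells of the $2\times 2$ table are matched by $\beta(z)+\gamma(u)$ by construction; the remaining cell gives $\beta(1)+\gamma(1)=p_{10}+p_{01}-p_{00}$, which equals $p_{11}$ precisely because the no-additive-interaction assumption (a) forces $p_{11}-p_{10}-p_{01}+p_{00}=0$. This secures condition (a) of Theorem \ref{thm::Zbias-scalar-case}. Monotonicity of $\beta$ and $\gamma$ needed in condition (b) of Theorem \ref{thm::Zbias-scalar-case} then reduces to $p_{10}\geq p_{00}$ and $p_{01}\geq p_{00}$, both of which follow from $\min(p_{10},p_{01})\geq p_{00}$; the hypothesis $\E(Y\mid A=a,U=1)\geq \E(Y\mid A=a,U=0)$ supplies the remaining monotonicity in $u$.

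The only place where the binary structure does real work is condition (c) of Theorem \ref{thm::Zbias-scalar-case}, which requires the essential supremum of $U$ given $(A,Z)$ to depend only on $A$. Because $U\in\{0,1\}$, this essential supremum is $0$ or $1$, and it equals $1$ exactly when $\pr(U=1\mid A=a,Z=z)>0$. Under the standing positivity assumptions $\pr(U=1)\in(0,1)$ and $p_{zu}\in(0,1)$, Bayes' formula together with $Z\ind U$ yields $\pr(U=1\mid A=a,Z=z)>0$ for every $(a,z)$, so the essential supremum is uniformly $1$ and in particular independent of $z$. With all three hypotheses of Theorem \ref{thm::Zbias-scalar-case} in hand, its conclusion \eqref{eq::result} transfers verbatim. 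The only step requiring any care is this positivity check for condition (c); the rest is bookkeeping that translates between the parameters $p_{zu}$ and the additive functions $(\beta,\gamma)$.
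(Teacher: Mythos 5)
Your proposal is correct, but it takes a different route from the paper. You specialize Theorem \ref{thm::Zbias-scalar-case} literally: build $\beta,\gamma$ from the cell probabilities (using no additive interaction to make the fourth cell consistent), read off monotonicity from $\min(p_{10},p_{01})\geq p_{00}$, and dispose of the essential-supremum condition (c) by noting that for binary $U$ it is automatically constant under positivity. The paper instead bypasses Theorem \ref{thm::Zbias-scalar-case} and verifies the hypotheses of Theorem \ref{thm::Zbias-scalar-case-general} directly: it invokes Lemma \ref{lemma::interactions} to get $\OR_{ZU|A=a}\leq 1$ for $a=0,1$, concludes that $\pr(U=1\mid A=a,Z=z)$ is non-increasing in $z$, and then writes $\mu_a(z)=\{m_a(1)-m_a(0)\}\pr(U=1\mid A=a,Z=z)+m_a(0)$ to get that $\E(Y\mid A=a,Z=z)$ is non-increasing in $z$. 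The paper's route buys two things: it avoids the essential-supremum technicality altogether (which in your argument forces you to import positivity assumptions, $\pr(U=1)\in(0,1)$ and $p_{zu}\in(0,1)$, that the corollary does not state explicitly, though they are implicit throughout and already needed for Lemma \ref{lemma::interactions} via $p_{00}>0$), and it replaces the summation-by-parts machinery inside Theorem \ref{thm::Zbias-scalar-case} with a one-line explicit formula for $\mu_a(z)$. Your route buys economy of statement --- the corollary really is just Theorem \ref{thm::Zbias-scalar-case} with a change of parametrization --- at the cost of having to argue that condition (c) is vacuous in the binary nondegenerate case. Both arguments are sound.
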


When $A$ given $Z$ and $U$ follows an multiplicative model, we have the following theorem.

\begin{theorem}
\label{thm::Zbias-scalar-case-multi}
In the causal diagram of Figure \ref{fg::DAG} with scalar $Z$ and $U$, \eqref{eq::result} holds if we replace Condition (a) of Theorem \ref{thm::Zbias-scalar-case} by
\begin{enumerate}
\item[(a')]
$\pr(A=1\mid Z=z, U=u) = \beta(z) \gamma(u)$.
\end{enumerate}
\end{theorem}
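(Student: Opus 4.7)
The plan is to reduce Theorem \ref{thm::Zbias-scalar-case-multi} to Theorem \ref{thm::Zbias-scalar-case-general} by verifying that the multiplicative model, together with the monotonicity of $\beta$, $\gamma$, and $\E(Y\mid A=a,U=u)$, forces the two monotonicity conditions (a) and (b) of the general theorem.

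First I verify Condition (a) of Theorem \ref{thm::Zbias-scalar-case-general}. Since $Z\ind U$ from the causal diagram in Figure \ref{fg::DAG}, integrating out $U$ gives
$$\pr(A=1\mid Z=z)=\int \beta(z)\gamma(u)\,F(\d u)=\beta(z)\,\E\{\gamma(U)\},$$
which is non-decreasing in $z$ because $\beta$ is non-decreasing and $\gamma\geq 0$. Symmetrically, $\pr(A=1\mid U=u)=\gamma(u)\,\E\{\beta(Z)\}$ is non-decreasing in $u$. The third piece, that $\E(Y\mid A=a,U=u)$ is non-decreasing in $u$, is retained directly from Condition (b) of Theorem \ref{thm::Zbias-scalar-case}.

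Second I verify Condition (b) of Theorem \ref{thm::Zbias-scalar-case-general}: that $\E(Y\mid A=a,Z=z)$ is non-increasing in $z$. Because $Z\ind Y\mid (A,U)$,
$$\E(Y\mid A=a,Z=z)=\int m_a(u)\,F(\d u\mid A=a,Z=z),$$
and $m_a$ is non-decreasing in $u$, so it suffices to show that, for each $a$, the family $F(\d u\mid A=a,Z=z)$ is stochastically non-increasing in $z$, i.e., that conditioning on the collider $A$ induces negative association between $Z$ and $U$. Under the multiplicative specification (a'), this is precisely the content of Lemma \ref{lemma::inter-multi-general} of the Supplementary Material, the multiplicative analogue of the collider-bias calculation used for the additive case; Condition (c) of Theorem \ref{thm::Zbias-scalar-case}, on the essential supremum of $U$ given $(A=a,Z=z)$, is what lets the lemma be invoked uniformly in $z$. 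Integrating the non-decreasing $m_a$ against a stochastically non-increasing family then yields the required monotonicity, and Theorem \ref{thm::Zbias-scalar-case-general} delivers \eqref{eq::result}.

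The main obstacle is the second step: unlike the additive form $\beta(z)+\gamma(u)$, the multiplicative form $\beta(z)\gamma(u)$ does not separate into pure $z$- and $u$-pieces after conditioning on $A$, so the rearrangement argument used in the proof of Theorem \ref{thm::Zbias-scalar-case} must be replaced by the multiplicative argument of Lemma \ref{lemma::inter-multi-general}. Once that lemma is in hand, the remaining bookkeeping—integrating $m_a(u)$ against the stochastically ordered family and applying Theorem \ref{thm::Zbias-scalar-case-general}—is routine.
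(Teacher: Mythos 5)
Your proposal is correct and follows essentially the same route as the paper: reduce to Theorem \ref{thm::Zbias-scalar-case-general} by using $Z\ind U$ to get monotonicity of $\pr(A=1\mid Z=z)$ and $\pr(A=1\mid U=u)$, then invoke Lemma \ref{lemma::inter-multi-general} to obtain $\partial F(u\mid A=a,Z=z)/\partial z\geq 0$ and integrate the non-decreasing $m_a$ against this stochastically ordered family exactly as in the proof of Theorem \ref{thm::Zbias-scalar-case}. One small correction: Condition (c) on the essential supremum is not needed to invoke the lemma; it is used in the integration-by-parts step so that the boundary term $m_a\{\overline{u}(a)\}$ does not depend on $z$.
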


When both the instrument $Z$ and the confounder $U$ are binary, Theorem \ref{thm::Zbias-scalar-case-multi} can be simplified.

\begin{corollary}
\label{thm::Zbias-binary-case-multi}
In the causal diagram of Figure \ref{fg::DAG} with binary $Z$ and $U$, \eqref{eq::result} holds if we replace Condition (a) of Corollary \ref{thm::Zbias-binary-case} by
\begin{enumerate}
\item[(a')]
there is no multiplicative interaction of $Z$ and $U$ on $A$, i.e., $p_{11}p_{00} = p_{10}p_{01} .$
\end{enumerate}
\end{corollary}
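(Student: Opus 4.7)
The plan is to derive the corollary as an immediate specialization of Theorem \ref{thm::Zbias-scalar-case-multi} to the binary setting. The main work is to translate the no-multiplicative-interaction condition (a$'$) together with the monotonicity conditions from Corollary \ref{thm::Zbias-binary-case} into the hypotheses of Theorem \ref{thm::Zbias-scalar-case-multi}, which requires a decomposition $\pr(A=1\mid Z=z,U=u) = \beta(z)\gamma(u)$ with $\beta$ and $\gamma$ non-decreasing, plus the essential supremum condition on $U$ inherited from Theorem \ref{thm::Zbias-scalar-case}.

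First, under the mild regularity that $p_{zu}\in(0,1)$ for all $z,u$ (boundary cases are degenerate and can be handled separately), I would exhibit an explicit decomposition by setting $\beta(0)=1$, $\beta(1)=p_{10}/p_{00}$, $\gamma(0)=p_{00}$, and $\gamma(1)=p_{01}$. Three of the four products $\beta(z)\gamma(u)$ then equal $p_{zu}$ by construction, and the remaining one, $\beta(1)\gamma(1) = p_{10}p_{01}/p_{00}$, equals $p_{11}$ precisely because of the no-multiplicative-interaction identity $p_{11}p_{00}=p_{10}p_{01}$ in (a$'$). This realizes condition (a$'$) of Theorem \ref{thm::Zbias-scalar-case-multi}.

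Next I would verify the remaining hypotheses. The assumption $\min(p_{10},p_{01})\geq p_{00}$ yields $\beta(1)=p_{10}/p_{00}\geq 1=\beta(0)$ and $\gamma(1)=p_{01}\geq p_{00}=\gamma(0)$, so both $\beta$ and $\gamma$ are non-decreasing; the monotonicity of $\E(Y\mid A=a,U=u)$ in $u$ is stated directly in Corollary \ref{thm::Zbias-binary-case}(b). The essential supremum condition (c) of Theorem \ref{thm::Zbias-scalar-case} is automatic for binary $U$: once each $p_{zu}$ lies strictly between $0$ and $1$, the conditional distribution of $U$ given $(A=a,Z=z)$ places positive mass on both $0$ and $1$ for every $(a,z)$, so the essential supremum equals $1$ regardless of $z$.

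Having verified every hypothesis of Theorem \ref{thm::Zbias-scalar-case-multi}, the inequality chain \eqref{eq::result} transfers immediately to the binary setting, proving the corollary. The only real pitfall is handling the boundary cases in which some $p_{zu}$ vanishes; these are easily disposed of, and the substantive step is simply the explicit factorization of $p_{zu}$ permitted by (a$'$).
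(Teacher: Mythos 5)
Your proof is correct, but it takes a different route from the paper's. The paper proves this corollary by re-running the argument of Corollary \ref{thm::Zbias-binary-case}: it invokes Lemma \ref{lemma::inter-multi} to get $p_{11}p_{00}/(p_{10}p_{01})=1\leq 1$ and $(1-p_{11})(1-p_{00})/\{(1-p_{10})(1-p_{01})\}\leq 1$, translates these via Bayes' theorem into $\OR_{ZU\mid A=a}\leq 1$, concludes that $\pr(U=1\mid A=a,Z=z)$ and hence $\mu_a(z)$ are non-increasing in $z$, and finishes with Theorem \ref{thm::Zbias-scalar-case-general}. You instead observe that for binary $Z$ and $U$ the no-multiplicative-interaction condition is \emph{exactly} the multiplicative model of Theorem \ref{thm::Zbias-scalar-case-multi}, and you make this concrete with the factorization $\beta(0)=1$, $\beta(1)=p_{10}/p_{00}$, $\gamma(0)=p_{00}$, $\gamma(1)=p_{01}$, with $\beta(1)\gamma(1)=p_{11}$ forced by (a$'$); monotonicity of $\beta$ and $\gamma$ then follows from $\min(p_{10},p_{01})\geq p_{00}$ (and $p_{11}\geq\max(p_{10},p_{01})$ comes for free from the factorization). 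Your route is arguably the more economical conceptual reduction, but it obliges you to do two things the paper's direct argument avoids: verify condition (c) of Theorem \ref{thm::Zbias-scalar-case} (the essential supremum of $U$ given $(A,Z)$), which you correctly note is automatic for non-degenerate binary $U$ under positivity, and assume $p_{zu}\in(0,1)$ so that the division by $p_{00}$ is legitimate --- though the paper's Lemma \ref{lemma::inter-multi} implicitly divides by $p_{00}$ as well, so this is not a real loss of generality. What the paper's route buys in exchange is that the collider-bias mechanism (negative $Z$--$U$ association within levels of $A$) is made explicit at the binary level, which is the interpretive point the authors emphasize in the main text.
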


We invoke the assumptions of no additive and multiplicative interaction of $Z$ and $U$ on $A$ in Theorems \ref{thm::Zbias-scalar-case} and \ref{thm::Zbias-scalar-case-multi} for easy interpretation. They are sufficient but not necessary conditions for Z-Bias. In fact, we show in the proofs that Conditions (a) and (a') in Theorems \ref{thm::Zbias-scalar-case} and \ref{thm::Zbias-scalar-case-multi} and Corollaries \ref{thm::Zbias-binary-case} and \ref{thm::Zbias-binary-case-multi} can be replaced by weaker conditions. For the case with binary $Z$ and $U$, these conditions are particularly easy to interpret: 
\begin{eqnarray}
\label{eq::weaker-conditions}
{  p_{11}p_{00} \over p_{10}p_{01} }  \leq 1,\quad
{ (1-p_{11})(1-p_{00}) \over  (1-p_{10}) (1-p_{01}) } \leq 1,
\end{eqnarray}
i.e., $Z$ and $U$ have non-positive multiplicative interaction on both the presence and absence of $A.$ Even if Condition (a) or (a') does not hold, one can show that half of the parameter space of $(p_{11}, p_{10}, p_{01}, p_{00})$ satisfies the weaker condition \eqref{eq::weaker-conditions}, which is only sufficient, not necessary. Therefore, even in the presence of additive or multiplicative interaction, Z-Bias arises in more than half of the parameter space for binary $(Z,U,A,Y)$.

\section{General Instrumental Variable and General Confounder}
\label{sec::general}

When the instrumental variable $Z$ and the confounder $U$ are vectors, Theorems \ref{thm::Zbias-scalar-case-general}--\ref{thm::Zbias-scalar-case-multi} still hold if the monotonicity assumptions hold for each component of $Z$ and $U$, and $Z$ and $U$ are multivariate totally positive of order two \citep{karlin1980classes}, including the case that the components of $Z$ and $U$ are mutually independent \citep{esary1967association}. A random vector $W$ is multivariate totally positive of order two, if its density $f(\cdot)$ satisfies $f\{ \max (w_1, w_2)  \}  f\{ \min (w_1, w_2)  \} \geq f(w_1) f(w_2)  $, where $\max (w_1, w_2)$ and $\min (w_1, w_2) $ are component-wise maximum and minimum of the vectors $w_1$ and $w_2$. In the following, we will develop general theory for Z-Bias without the total positivity assumption about the components of $Z$ and $U.$

It is relatively straightforward to summarize a general instrumental variable $Z$ by a scalar propensity score $\Pi = \Pi(Z) = \pr(A=1\mid Z)$, because $Z\ind A\mid \Pi(Z)$ as shown in \citet{Rosenbaum::1983}. 
We define $\nu_a(\pi) = \E(Y\mid A=a, \Pi = \pi)$. The adjusted estimator for the population under treatment is
$$
\ACE_1^\adj = \E(Y\mid A=1) - \int   \nu_0(\pi) F(\d \pi \mid A=1),
$$
the adjusted estimator for the population under control is
$$
\ACE_0^\adj = \int  \nu_1(\pi) F(\d \pi \mid A=0) - \E(Y\mid A=0),
$$
and the adjusted estimator for the whole population is
$$
\ACE^\adj = \int  \nu_1(\pi) F(\d \pi)  -  \int   \nu_0(\pi) F(\d \pi)  .
$$
When $Z$ is scalar, then the above three formulas reduce to the ones in Section \ref{sec::scalar}.

\citet{greenland1986identifiability} showed that for the causal effect on the treated population, $Y(0)$ alone suffices to control for confounding; likewise, for the causal effect on the control population, $Y(1)$ alone suffices to control for confounding. If interest lies in all three of our average causal effects, then we need to take $U = \{Y(1), Y(0)\}$ as the ultimate confounder for the relationship of $A$ on $Y.$ This is not an assumption about $U$. Because $Y = AY(1)+(1-A)Y(0) $ is a deterministic function of $A$ and $\{Y(1), Y(0)\}$, this implies that $U = \{Y(1), Y(0)\}$ satisfies the ignorability assumption \citep{Rosenbaum::1983}, or blocks all the back-door paths from $A$ to $Y$ \citep{pearl1995causal, Pearl2000}. We represent the causal structure in Figure \ref{fg::DAG2}.

\begin{figure}[!t]
\centering
$$
\begin{xy}
\xymatrix{
& &&& & U = \{Y(1), Y(0)\} \ar[ddl] \ar[ddr] \\
\\
Z\ar[rr]&&\Pi = \Pi(Z) \ar[rr]  & & A\ar[rr]  & & Y}
\end{xy}
$$
\caption{Directed Acyclic Graph for Z-Bias With General Instrument and Confounder}\label{fg::DAG2}
\end{figure}
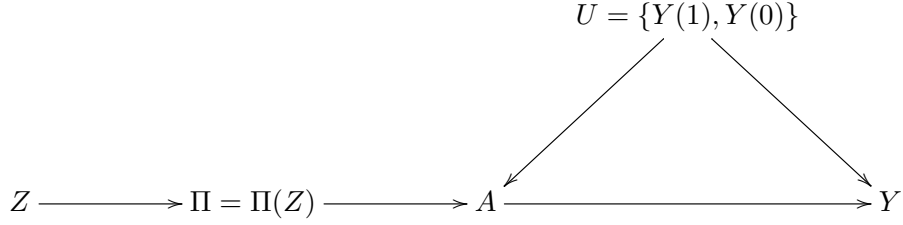

%

We first state a theorem without assuming the structure of the causal diagram in Figure \ref{fg::DAG2}.

\begin{theorem}
\label{thm::general-iv-conf-weak}
If for both $a=1$ and $0$, $\pr\{ A=1\mid Y(a) \}$ is non-decreasing in $Y(a)$, and $\cov\{ \Pi, \nu_a(\Pi) \} \leq 0$, then \eqref{eq::result} holds.
\end{theorem}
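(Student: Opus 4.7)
The plan is to decompose the chain of inequalities in \eqref{eq::result} into two independent halves---unadjusted versus true and adjusted versus unadjusted---and to treat each via a direct covariance identity. The whole argument rests on two facts: first, the ignorability-achieving choice $U = \{Y(1),Y(0)\}$ makes $m_a(u) = Y(a)$, so the three true ACEs reduce to the familiar ATT/ATC/ATE decompositions in terms of the potential outcomes; second, the propensity score $\Pi = \pr(A=1\mid Z)$ satisfies the balancing property $\E(\mathbb{I}\{A=1\}\mid \Pi) = \Pi$, which linearises conditional expectations given $A$ into unconditional expectations against $\Pi$.

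For the right-hand inequality $\FACE \geq \ACE^\true$ and its two conditional analogues, I would first use the ignorability identification above to write
$$\FACE - \ACE_1^\true = \E\{Y(0)\mid A=1\} - \E\{Y(0)\mid A=0\},$$
and symmetrically $\FACE - \ACE_0^\true = \E\{Y(1)\mid A=1\} - \E\{Y(1)\mid A=0\}$, while $\FACE - \ACE^\true$ splits as the sum of two contributions of the form $\E\{Y(a)\mid A=1\} - \E\{Y(a)\}$ or $\E\{Y(a)\} - \E\{Y(a)\mid A=0\}$. Conditioning on $Y(a)$ and writing the two treatment-specific means as integrals against the weights $\pr(A=1\mid Y(a))$ and $\pr(A=0\mid Y(a))$ yields the one-line identity
$$\E\{Y(a)\mid A=1\} - \E\{Y(a)\mid A=0\} = \frac{\cov\{Y(a),\, \pr(A=1\mid Y(a))\}}{\pr(A=1)\pr(A=0)},$$
whose numerator is non-negative exactly when $\pr(A=1\mid Y(a))$ is non-decreasing in $Y(a)$---the first hypothesis of the theorem.

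For the left-hand inequality $\ACE^\adj \geq \FACE$ and its two conditional analogues, I would exploit the balancing property to rewrite $\E\{\nu_a(\Pi)\mid A=1\} = \E\{\Pi\nu_a(\Pi)\}/\E(\Pi)$ and $\E\{\nu_a(\Pi)\mid A=0\} = \E\{(1-\Pi)\nu_a(\Pi)\}/\{1-\E(\Pi)\}$. Subtracting and simplifying gives
$$\E\{\nu_a(\Pi)\mid A=0\} - \E\{\nu_a(\Pi)\mid A=1\} = -\frac{\cov\{\Pi,\nu_a(\Pi)\}}{\E(\Pi)\{1-\E(\Pi)\}},$$
which is non-negative under the second hypothesis. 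A direct check shows that each of $\ACE_1^\adj - \FACE$, $\ACE_0^\adj - \FACE$ and $\ACE^\adj - \FACE$ is a non-negative linear combination of these two quantities (indexed by $a=0,1$), completing the chain.

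No single step is difficult; the argument is essentially two covariance identities glued together. The main obstacle is organisational---noticing that once one passes to the canonical confounder $U = \{Y(1),Y(0)\}$ and uses the propensity-score balancing identity, all six comparisons collapse into combinations of the two covariance expressions above, so the two hypotheses of the theorem give exactly the right signs. I would therefore present the two covariance identities as stand-alone lemmas and then deduce the six inequalities as corollaries.
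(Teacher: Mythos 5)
Your proposal is correct and follows essentially the same route as the paper: the paper obtains $\FACE \geq \ACE_1^\true$ and $\FACE \geq \ACE_0^\true$ by taking $Y(0)$ (resp.\ $Y(1)$) itself as the confounder in Lemmas \ref{lemma::chiba2009-1}--\ref{lemma::chiba2009-2}, obtains $\ACE_a^\adj \geq \FACE$ from the covariance formula of Lemma \ref{lemma::adjust-formula}, and finishes with the same convex-combination identities $\ACE^\true = f\ACE_1^\true + (1-f)\ACE_0^\true$ and $\ACE^\adj = f\ACE_1^\adj + (1-f)\ACE_0^\adj$. The only difference is that you re-derive inline, as explicit covariance identities, what the paper delegates to those cited lemmas.
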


In a randomized experiment $A\ind Y(a)$, so the dependence of $\pr\{ A=1\mid Y(a) \}$ on $Y(a)$ characterizes the self-selection process of an observational study. The condition $\cov\{ \Pi, \nu_a(\Pi) \} \leq 0$ in Theorem \ref{thm::general-iv-conf-weak} is another measure of the collider-bias caused by conditioning on $A$, as $\nu_a(\pi) = \E\{ Y(a) \mid A=a, \Pi = \pi \}$ and $Y(a)$ is a component of $U$ in Figure \ref{fg::DAG2}. This  measure of collider bias is more general than the one in Theorem \ref{thm::Zbias-scalar-case-general}. Analogous to Section \ref{sec::scalar}, we will present more transparent sufficient conditions for Z-Bias to aid interpretation.

In the following, we use the distributional association measure \citep{cox2003general, ma2006collapsibility, xie2008some}, i.e., random variable $V$ has a non-negative distributional association on random variable $W$, if the conditional distribution satisfies $\partial F(w\mid v) / \partial v \leq 0$ for all $v$ and $w.$ If the random variables are discrete, then partial differentiation is replaced by differencing between adjacent levels \citep{cox2003general}.

If there is no additive interaction between $\Pi$ and $\{Y(1), Y(0)\}$ on $A$, then we have the following results.

\begin{theorem}
\label{thm::general-iv-conf}
In the causal diagram of Figure \ref{fg::DAG2}, \eqref{eq::result} holds if
\begin{enumerate}
[(a)]
\item
$\pr(A=1\mid \Pi, U) = \Pi + \delta\{  Y(1) \} + \eta\{  Y(0) \}$ with $\delta(\cdot)$ and $\eta(\cdot)$ being non-decreasing;

\item
$\{Y(1), Y(0)\}$ have non-negative distributional associations on each other, i.e., $\partial F(y_1 \mid y_0) / \partial y_0 \leq 0$ and $\partial F(y_0 \mid y_1) / \partial y_1 \leq 0$ for all $y_1$ and $y_0$;

\item
the essential supremum  of $Y(1)$ given $Y(0)$ does not depend on $Y(0)$, and the essential supremum  of $Y(0)$ given $Y(1)$ does not depend on $Y(1)$.
\end{enumerate}
\end{theorem}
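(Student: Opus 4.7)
The plan is to reduce Theorem~\ref{thm::general-iv-conf} to Theorem~\ref{thm::general-iv-conf-weak} by verifying its two hypotheses under (a)--(c). Throughout, I would use two consequences of Figure~\ref{fg::DAG2}: the propensity-score identity $\pr(A=1 \mid \Pi = \pi) = \pi$, and the independence $\Pi \ind \{Y(1), Y(0)\}$, which follows because $\Pi$ is a function of $Z$ and $Z \ind U$ in the diagram.

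For the monotonicity condition of Theorem~\ref{thm::general-iv-conf-weak}, I would take conditional expectations in (a): since $\Pi \ind Y(0) \mid Y(1)$, we have $\pr\{A=1 \mid Y(1) = y_1\} = E(\Pi) + \delta(y_1) + \widetilde{\eta}(y_1)$, where $\widetilde{\eta}(y_1) = E\{\eta(Y(0)) \mid Y(1) = y_1\}$. By (b), $Y(0) \mid Y(1) = y_1$ is stochastically non-decreasing in $y_1$, and since $\eta$ is non-decreasing, $\widetilde{\eta}$ is non-decreasing; together with monotonicity of $\delta$, this yields that $\pr\{A=1 \mid Y(1)\}$ is non-decreasing. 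Condition (c) enters here as a regularity assumption: the rectangular-support property prevents the conditional distribution of $Y(0)$ from losing mass at the top as $y_1$ varies, so the stochastic monotonicity from (b) transfers cleanly to conditional expectations. The case $a=0$ is symmetric.

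For the covariance condition, I would compute $\nu_1(\pi)$ explicitly via Bayes's rule. Using $\pr(A=1 \mid \Pi = \pi) = \pi$ and $\Pi \ind Y(1)$, the conditional density satisfies
\begin{equation*}
f\{y_1 \mid A=1, \Pi = \pi\} = \frac{\pi + \delta(y_1) + \widetilde{\eta}(y_1)}{\pi} f(y_1).
\end{equation*}
Integrating $y_1$ against this density gives $\nu_1(\pi) = E\{Y(1)\} + c_1/\pi$ with $c_1 = E\{Y(1)\delta(Y(1))\} + E\{Y(1)\eta(Y(0))\}$. Marginalizing (a) over $U$ at $\Pi = \pi$, together with $\Pi \ind U$, forces the compatibility identity $E\{\delta(Y(1))\} + E\{\eta(Y(0))\} = 0$, which lets me rewrite $c_1 = \cov\{Y(1), \delta(Y(1))\} + \cov\{Y(1), \eta(Y(0))\}$. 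The first covariance is non-negative by the classical inequality for two non-decreasing functions of the same random variable, and the second is non-negative by (b). Hence $c_1 \geq 0$, $\nu_1$ is non-increasing in $\pi$, and $\cov\{\Pi, \nu_1(\Pi)\} \leq 0$. The symmetric computation for $\nu_0$, starting from $\pr(A=0 \mid \Pi = \pi) = 1-\pi$, yields $\nu_0(\pi) = E\{Y(0)\} - c_0/(1-\pi)$ with $c_0 \geq 0$ by the same mechanism, so the covariance inequality holds for $a=0$ as well. Theorem~\ref{thm::general-iv-conf-weak} then delivers \eqref{eq::result}.

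The main obstacle, I expect, is isolating the compatibility identity and using it to collapse $c_1$ into a sum of two covariances of monotone functions; without this rewrite, positivity of $c_1$ is not apparent from its defining cross-moment expression, since $\delta$ and $\eta$ are not constrained to be non-negative individually. Once this step is clean, the two covariance inequalities---one from ordinary monotone-function correlation, one from the distributional-association hypothesis (b)---finish the verification of Theorem~\ref{thm::general-iv-conf-weak}'s hypotheses, and the essential-supremum condition (c) is reduced to its technical role of ruling out support pathologies in the monotonicity argument.
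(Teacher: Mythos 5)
Your proposal is correct, and like the paper it reduces everything to verifying the two hypotheses of Theorem~\ref{thm::general-iv-conf-weak}. The first half --- monotonicity of $\pr\{A=1\mid Y(a)\}$ in $Y(a)$ --- is essentially the paper's argument, phrased via first-order stochastic dominance rather than the paper's integration by parts (which is where the paper actually consumes condition~(c), via the boundary term $\eta(\overline{y}_0)$). The second half is genuinely different. The paper verifies $\cov\{\Pi,\nu_a(\Pi)\}\leq 0$ by feeding the reduced additive model $\pr\{A=1\mid \Pi, Y(1)\}=\Pi+\widetilde{\delta}\{Y(1)\}$ into its collider-bias Lemma~\ref{lemma::general-interaction}, obtaining $\partial \pr\{Y(1)\leq y_1\mid A=1,\Pi=\pi\}/\partial\pi\geq 0$, i.e., full stochastic monotonicity of the conditional law of $Y(a)$ in $\pi$, of which monotonicity of $\nu_a$ is a corollary. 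You instead exploit the exact identity $\pr(A=1\mid \Pi=\pi)=\pi$ to get, by Bayes's rule, the closed form $\nu_1(\pi)=\E\{Y(1)\}+c_1/\pi$ and $\nu_0(\pi)=\E\{Y(0)\}-c_0/(1-\pi)$, then show $c_a\geq 0$ using the compatibility identity $\E[\delta\{Y(1)\}]+\E[\eta\{Y(0)\}]=0$ (the paper's Remark) together with Chebyshev's association inequality. Your route is more elementary --- it bypasses Lemmas~\ref{lemma::interactions} and~\ref{lemma::general-interaction} entirely --- and yields explicit functional forms for $\nu_a$; the paper's route is less tied to the coefficient of $\Pi$ being exactly one and delivers distributional rather than mean monotonicity, which it reuses for Corollary~\ref{thm::DCE-Zbias}. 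One step you should spell out: non-negativity of $\cov\{Y(1),\eta(Y(0))\}$ does not follow from condition~(b) alone in one stroke; it follows by conditioning on $Y(1)$, writing it as $\cov[Y(1),\widetilde{\eta}\{Y(1)\}]$ with $\widetilde{\eta}$ the non-decreasing conditional mean you already constructed, and then applying the single-variable association inequality.
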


\begin{remark}
If we impose an additive model 
$
\pr(A=1\mid \Pi, U) = h(\Pi) + \delta\{  Y(1) \} + \eta\{  Y(0) \},
$ 
then independence of $\Pi$ and $U$ implies that 
$
\pr(A=1\mid \Pi) = h(\Pi) + \E[  \delta\{  Y(1) \}  ]+ \E [\eta\{  Y(0) \} ] = \Pi.
$
Therefore, we must have $h(\Pi) = \Pi$ and $\E[  \delta\{  Y(1) \}  ]+ \E [\eta\{  Y(0) \} ] = 0. $ 
\end{remark}

When the outcome is binary, the distributional association between $Y(1)$ and $Y(0)$ becomes their odds ratio \citep{xie2008some}, and non-negative distributional association between $Y(1)$ and $Y(0)$ is equivalent to 
$$
\OR_Y 
=  \frac{   \pr\{  Y(1) = 1,  Y(0) = 1 \}  \pr\{  Y(1) = 0,  Y(0) = 0 \}        }
{ \pr\{  Y(1) = 1,   Y(0) = 0 \}   \pr\{  Y(1) = 0,   Y(0) = 1 \}   } \geq 1 .
$$
We can further relax the model assumption of $A$ given $\Pi$ and $U$ by allowing for non-negative interaction between $Y(1)$ and $Y(0)$ on $A.$

\begin{corollary}
\label{thm::general-iv-conf-binary}
In the causal diagram of Figure \ref{fg::DAG2} with a binary outcome $Y$, \eqref{eq::result} holds if
\begin{enumerate}
[(a)]
\item\label{condition::general-iv-conf-binary-a}
$\pr(A=1\mid \Pi, U) =\alpha +  \Pi + \delta  Y(1) + \eta   Y(0) + \theta Y(1)Y(0)$ with $\delta, \eta, \theta \geq 0$;

\item
$\OR_Y \geq 1.$
\end{enumerate}
\end{corollary}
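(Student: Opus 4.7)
The plan is to apply Theorem~\ref{thm::general-iv-conf-weak} rather than Theorem~\ref{thm::general-iv-conf}, because the interaction term $\theta Y(1) Y(0)$ prevents the model in Condition~(a) from being written in the purely additive form required by Theorem~\ref{thm::general-iv-conf}. It then suffices to verify the two hypotheses of Theorem~\ref{thm::general-iv-conf-weak}: that $\pr\{A=1 \mid Y(a)\}$ is non-decreasing in $Y(a)$, and that $\cov\{\Pi, \nu_a(\Pi)\} \leq 0$, for both $a=0$ and $1$.

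For the first hypothesis, I would condition on $Y(a)$ and use $\Pi \ind U$ to compute $\pr\{A=1 \mid Y(a)=y_a\}$ by averaging the model over $\Pi$ and $Y(1-a)$ given $Y(a)=y_a$. The increment from $y_a=0$ to $y_a=1$ splits into three contributions: a direct main effect, a shift in $\pr\{Y(1-a)=1 \mid Y(a)=y_a\}$, and an interaction contribution. The assumptions $\delta, \eta, \theta \geq 0$ together with $\OR_Y \geq 1$, which implies $\pr\{Y(1-a)=1 \mid Y(a)=1\} \geq \pr\{Y(1-a)=1 \mid Y(a)=0\}$, make each contribution non-negative.

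For the second hypothesis, I would show the stronger statement that $\nu_a(\pi)$ is non-increasing in $\pi$. By Bayes' rule and $\Pi \ind U$,
\[
\nu_1(\pi) = \frac{\pr\{A=1 \mid Y(1)=1, \Pi=\pi\}\, \pr\{Y(1)=1\}}{\pi},
\]
and the assumed model makes the numerator affine in $\pi$ with unit slope, so $\nu_1(\pi)$ is non-increasing in $\pi$ iff the $\pi$-free part of the numerator, namely $\alpha + \delta + (\eta + \theta)\pr\{Y(0)=1 \mid Y(1)=1\}$, is non-negative.

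The main obstacle is controlling the sign of this expression, since $\alpha$ may well be negative. The key step is to pin down $\alpha$ using the consistency requirement $\E[\pr(A=1 \mid \Pi, U) \mid \Pi] = \pr(A=1 \mid \Pi) = \Pi$, which via $\Pi \ind U$ forces $\alpha = -[\delta \pr\{Y(1)=1\} + \eta \pr\{Y(0)=1\} + \theta \pr\{Y(1)=1, Y(0)=1\}]$. Substituting this value, the expression collapses to a sum of three non-negative pieces: $\delta \pr\{Y(1)=0\}$, $\eta [\pr\{Y(0)=1 \mid Y(1)=1\} - \pr\{Y(0)=1\}]$ (non-negative by $\OR_Y \geq 1$), and $\theta \pr\{Y(0)=1 \mid Y(1)=1\}\,\pr\{Y(1)=0\}$. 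A symmetric argument handles $\nu_0(\pi)$. Both hypotheses of Theorem~\ref{thm::general-iv-conf-weak} are thereby verified, yielding \eqref{eq::result}.
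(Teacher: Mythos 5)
Your proof is correct, and for the key covariance step it takes a genuinely different route from the paper. Your verification of the first hypothesis of Theorem~\ref{thm::general-iv-conf-weak} coincides with the paper's fourth and fifth ``ingredients'': marginalizing over $\Pi$ and $Y(1-a)$ and checking that the increment $\delta + \eta\,\RD_Y + \theta\,\pr\{Y(0)=1\mid Y(1)=1\}$ (and its mirror image) is non-negative under $\delta,\eta,\theta\geq 0$ and $\OR_Y\geq 1$. For the second hypothesis the paper instead marginalizes out $Y(1-a)$ to obtain an additive, monotone two-variable model $\pr\{A=1\mid \Pi, Y(a)\} = \Pi + \widetilde{\delta}\,[Y(a) - \E\{Y(a)\}]$ and then invokes the general collider-bias result (Lemma~\ref{lemma::general-interaction}, which itself rests on Lemma~\ref{lemma::interactions}) to get non-positive distributional dependence of $Y(a)$ on $\Pi$ given $A=a$, and hence $\cov\{\Pi,\nu_a(\Pi)\}\leq 0$. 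You bypass that machinery entirely: you compute $\nu_a(\pi)$ in closed form by Bayes' rule, using that the outcome is binary, that the numerator is affine in $\pi$ with unit slope, and that the constraint $\pr(A=1\mid\Pi)=\Pi$ identifies $\alpha$ (the paper's Remark~2), after which the $\pi$-free part of the numerator decomposes into three manifestly non-negative terms. I checked the decomposition $c=\delta\,\pr\{Y(1)=0\}+\eta\,[\pr\{Y(0)=1\mid Y(1)=1\}-\pr\{Y(0)=1\}]+\theta\,\pr\{Y(0)=1\mid Y(1)=1\}\,\pr\{Y(1)=0\}$ and it is right, as is the symmetric case for $\nu_0$. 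Your argument is more elementary and even yields the stronger conclusion that $\nu_a(\pi)$ is monotone in $\pi$, but it leans on binarity of $Y$ in a way the paper's distributional-association argument does not; the paper's route is the one that extends to general outcomes (Theorem~\ref{thm::general-iv-conf}) and to the distributional effects in the supplement. One step worth making explicit: passing from monotonicity of $\nu_a(\Pi)$ to $\cov\{\Pi,\nu_a(\Pi)\}\leq 0$ uses Lemma~\ref{lemma::epw1967} (applied to $\Pi$ and $-\nu_a(\Pi)$), which you invoke only implicitly.
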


\begin{remark}
If we have an additive model of $A$ given $\Pi$ and $U$,
$
\pr(A=1\mid \Pi, U) = h(\Pi) + g(U),
$
then the functional form $g(U) = \alpha +  \delta  Y(1) + \eta   Y(0) + \theta Y(1)Y(0)$ imposes no restriction for binary outcome. Furthermore, $\pr(A=1\mid \Pi) = \Pi$ implies that $h(\Pi) = \Pi$ and $E\{ g(U) \} = 0$, i.e., $\alpha = -\delta E\{  Y(1)\} -\eta \E\{ Y(0)  \} - \theta \E \{ Y(1)Y(0)  \}.$ Therefore, the additive model in Condition \eqref{condition::general-iv-conf-binary-a} of Corollary \ref{thm::general-iv-conf-binary} is
$$
\pr(A=1\mid \Pi, U) =  \Pi + \delta [Y(1) - E\{  Y(1)\}  ]   + \eta  [  Y(0)- \E\{ Y(0)  \} ]  
+ \theta[  Y(1)Y(0) -  \E \{ Y(1)Y(0)  \} ]  .
$$
\end{remark}

If there is no multiplicative interaction of $\Pi$ and $\{ Y(1), Y(0) \}$ on $Z$, then we have the following results.

\begin{theorem}
\label{thm::general-iv-conf-multi}
In the causal diagram of Figure \ref{fg::DAG2}, \eqref{eq::result} holds if we replace Condition (a) of Theorem \ref{thm::general-iv-conf} by
\begin{enumerate}
[(a')]
\item
$\pr(A=1\mid \Pi, U) = \Pi  \delta\{  Y(1) \}  \eta\{  Y(0) \}$ with $\delta(\cdot)$ and $\eta(\cdot)$ being non-decreasing.
\end{enumerate}
\end{theorem}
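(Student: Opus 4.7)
The plan is to reduce the multiplicative case to Theorem \ref{thm::general-iv-conf-weak}, which requires $\pr\{A=1\mid Y(a)\}$ to be non-decreasing in $Y(a)$ and $\cov\{\Pi,\nu_a(\Pi)\}\leq 0$ for each $a=0,1$. Two structural facts of Figure \ref{fg::DAG2} are the engine of the argument: $\Pi$ is a function of $Z$ and $Z\ind U=\{Y(1),Y(0)\}$, so $\Pi\ind U$; and $\pr(A=1\mid \Pi)=\Pi$ by construction of the propensity score. Plugging (a') into the latter identity and taking expectation over $U$ gives the normalization $\E[\delta\{Y(1)\}\eta\{Y(0)\}]=1$, which I use throughout.

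For the first condition, I condition on $Y(1)$ and use $\Pi\ind U$ to write
\begin{equation*}
\pr\{A=1\mid Y(1)\}=\E(\Pi)\,\delta\{Y(1)\}\,\E\bigl[\eta\{Y(0)\}\mid Y(1)\bigr].
\end{equation*}
This is non-decreasing in $Y(1)$ because $\delta$ is non-decreasing and, by the distributional association in Condition (b) of Theorem \ref{thm::general-iv-conf}, $\E[\eta\{Y(0)\}\mid Y(1)]$ is a non-decreasing function of $Y(1)$ whenever $\eta$ is non-decreasing. The case $a=0$ is symmetric.

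For the second condition, I compute $\nu_a(\pi)$ explicitly using $A\ind\{Y(1),Y(0)\}\mid(\Pi,U)$ together with $\Pi\ind U$. In the case $a=1$ the $\pi$ cancels, giving $\nu_1(\pi)=\E[Y(1)\delta\{Y(1)\}\eta\{Y(0)\}]$, a constant, so $\cov\{\Pi,\nu_1(\Pi)\}=0$. An analogous computation for $a=0$ yields
\begin{equation*}
\nu_0(\pi)=\frac{\E\{Y(0)\}-\pi\,\E[Y(0)\delta\{Y(1)\}\eta\{Y(0)\}]}{1-\pi},
\end{equation*}
which is monotone in $\pi$ with derivative of the sign of $\E[Y(0)\delta\{Y(1)\}\eta\{Y(0)\}]-\E\{Y(0)\}$. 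Hence $\cov\{\Pi,\nu_0(\Pi)\}\leq 0$ will follow once I establish $\E[Y(0)\delta\{Y(1)\}\eta\{Y(0)\}]\geq \E\{Y(0)\}$, which by the normalization is the single inequality $\cov[Y(0),\delta\{Y(1)\}\eta\{Y(0)\}]\geq 0$.

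The main obstacle is this covariance inequality. The key step is the tower identity
\begin{equation*}
\cov\bigl[Y(0),\,\delta\{Y(1)\}\eta\{Y(0)\}\bigr]=\cov\bigl(Y(0),\,\eta\{Y(0)\}\,\E[\delta\{Y(1)\}\mid Y(0)]\bigr),
\end{equation*}
which holds because conditioning on $Y(0)$ makes $Y(0)$ deterministic so the expected conditional covariance vanishes. The second argument is a product of two non-decreasing functions of the scalar $Y(0)$: $\eta\{Y(0)\}$ directly, and $\E[\delta\{Y(1)\}\mid Y(0)]$ because $\delta$ is non-decreasing and, by Condition (b), $Y(1)\mid Y(0)$ is stochastically non-decreasing in $Y(0)$. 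Chebyshev's covariance inequality for two non-decreasing functions of the same random variable then delivers the required non-negativity. Feeding the two verified conditions into Theorem \ref{thm::general-iv-conf-weak} yields \eqref{eq::result}.
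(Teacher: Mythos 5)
Your proof is correct, and for the key covariance condition it takes a genuinely different route from the paper's. Both arguments reduce the theorem to Theorem \ref{thm::general-iv-conf-weak}, and your verification that $\pr\{A=1\mid Y(a)\}$ is non-decreasing matches the paper's: the paper writes $\pr\{A=1\mid Y(1)=y_1\}=f\widetilde{\delta}(y_1)$ with $\widetilde{\delta}(y_1)=\delta(y_1)\int\eta(y_0)F(\d y_0\mid y_1)$ and proves monotonicity of $\widetilde{\delta}$ by integration by parts (which is where Condition (c) enters), whereas you get the same monotonicity directly from the stochastic-dominance reading of Condition (b). Where you diverge is on $\cov\{\Pi,\nu_a(\Pi)\}\le 0$: the paper feeds the marginalized multiplicative models $\pr\{A=1\mid\Pi,Y(a)\}=\Pi\,\widetilde{\delta}\{Y(a)\}$ into Lemma \ref{lemma::inter-multi-general} to obtain the distributional statements $Y(1)\ind\Pi\mid A=1$ and $\partial\pr\{Y(0)\le y_0\mid A=0,\Pi=\pi\}/\partial\pi\ge 0$ in \eqref{eq::distributional-3}, and then passes to means; you instead compute $\nu_a(\pi)$ in closed form, observe that $\nu_1$ is constant (the mean shadow of the paper's conditional independence), and reduce the $a=0$ case to the single inequality $\cov[Y(0),\delta\{Y(1)\}\eta\{Y(0)\}]\ge 0$, dispatched by the tower property and Chebyshev's covariance inequality (Lemma \ref{lemma::epw1967} with $K=1$). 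Your route is more elementary and self-contained; the paper's yields the stronger distributional conclusion, which it reuses for the distributional-causal-effect extension in the Supplementary Material. Two small points to tighten: (i) with $c_0=\E\{Y(0)\}$ and $c_1=\E[Y(0)\delta\{Y(1)\}\eta\{Y(0)\}]$, the derivative of $\nu_0(\pi)=(c_0-c_1\pi)/(1-\pi)$ is $(c_0-c_1)/(1-\pi)^2$, so it has the sign of $c_0-c_1$, not $c_1-c_0$ as you wrote; your target inequality $c_1\ge c_0$ is nonetheless exactly what makes $\nu_0$ non-increasing, so the slip is only in the prose. (ii) Your repeated appeal to ``a product of two non-decreasing functions is non-decreasing'' requires both factors to be non-negative; this holds here because $\delta$ and $\eta$ must be positive for the model to return probabilities (the paper makes the analogous positivity explicit in Lemma \ref{lemma::inter-multi-general}), but it is worth stating.
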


\begin{corollary}
\label{thm::general-iv-conf-binary-multi}
In the causal diagram of Figure \ref{fg::DAG2} with a binary outcome $Y$, \eqref{eq::result} holds if we replace Condition (a) of Corollary \ref{thm::general-iv-conf-binary} by
\begin{enumerate}
[(a')]
\item
$\pr(A=1\mid \Pi, U) = \alpha \Pi  \delta^{  Y(1) } \eta^{   Y(0) }  \theta^{ Y(1)Y(0) }$ with $\delta, \eta, \theta \geq 1$.
\end{enumerate} 
\end{corollary}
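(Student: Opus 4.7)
The plan is to reduce Corollary~\ref{thm::general-iv-conf-binary-multi} to Theorem~\ref{thm::general-iv-conf-weak} by verifying its two sufficient conditions directly. The interaction term $\theta^{Y(1)Y(0)}$ does not factor as a product of separate functions of $Y(1)$ and $Y(0)$, so Theorem~\ref{thm::general-iv-conf-multi} does not apply as a black box and a standalone calculation is needed. Throughout I abbreviate $g(U) = \alpha\,\delta^{Y(1)}\eta^{Y(0)}\theta^{Y(1)Y(0)}$, so the assumed model reads $\pr(A=1\mid \Pi,U)=\Pi\,g(U)$. Combining $\Pi\ind U$ with the propensity score identity $\pr(A=1\mid \Pi)=\Pi$ forces $\E\{g(U)\}=1$, which pins down $\alpha$.

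For the collider-bias condition $\cov\{\Pi,\nu_a(\Pi)\}\leq 0$, I compute $\nu_a(\pi)$ via Bayes' rule. Using $\Pi\ind U$, the conditional distribution of $U$ given $A=1,\Pi=\pi$ simplifies to $g(u)F(\d u)$, which is free of $\pi$; hence $\nu_1(\pi)$ is constant in $\pi$ and its covariance with $\Pi$ vanishes. The case $a=0$ is asymmetric: the analogous computation yields
\[
\nu_0(\pi) = \frac{\E\{Y(0)\} - \pi\,\E\{Y(0)g(U)\}}{1-\pi},
\]
whose derivative in $\pi$ simplifies to $-\cov\{Y(0),g(U)\}/(1-\pi)^2$. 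Since $\delta,\eta,\theta\geq 1$ the function $g(U)$ is coordinate-wise non-decreasing in $(Y(1),Y(0))$, and $\OR_Y\geq 1$ is equivalent to positive quadrant dependence of the binary pair $(Y(1),Y(0))$; an FKG-type inequality then gives $\cov\{Y(0),g(U)\}\geq 0$, so $\nu_0$ is non-increasing in $\Pi$ and the covariance bound holds.

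For the self-selection condition, $\Pi\ind U$ gives $\pr\{A=1\mid Y(a)=y\} = \E(\Pi)\,\E\{g(U)\mid Y(a)=y\}$. Writing $p_y=\pr\{Y(1-a)=1\mid Y(a)=y\}$ and evaluating at $y\in\{0,1\}$, I chain the elementary inequalities $\delta\geq 1$, $\eta\theta\geq \eta\geq 1$, and $p_1\geq p_0$ (the last from $\OR_Y\geq 1$) to obtain the required monotonicity of $\E\{g(U)\mid Y(a)\}$ in $Y(a)$. With both conditions in place, Theorem~\ref{thm::general-iv-conf-weak} delivers \eqref{eq::result}. The main obstacle is the $a=0$ case of the collider-bias step: unlike $\nu_1$, the function $\nu_0(\pi)$ is not constant in $\pi$, because $1-\Pi g(U)$ fails to be multiplicative, so its monotonicity must be extracted from the FKG coupling of coordinate-wise monotonicity of $g(U)$ with positive quadrant dependence of $(Y(1),Y(0))$.
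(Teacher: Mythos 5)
Your proof is correct, and it reaches the same destination --- verifying the two hypotheses of Theorem \ref{thm::general-iv-conf-weak} --- but by a partly different road. The self-selection condition is handled essentially as in the paper: you marginalize over the other potential outcome and chain $\delta\geq 1$, $\eta\theta\geq\eta\geq 1$ and $p_1\geq p_0$ (from $\OR_Y\geq 1$), which is exactly the paper's computation showing the induced coefficient $\widetilde{\delta}\geq 1$. Where you diverge is the collider-bias condition $\cov\{\Pi,\nu_a(\Pi)\}\leq 0$. The paper first shows that the marginalized model $\pr\{A=1\mid \Pi, Y(a)\}$ is again multiplicative and monotone in $\Pi$ and $Y(a)$, and then feeds this two-variable model into Lemma \ref{lemma::inter-multi-general}, obtaining $Y(1)\ind\Pi\mid A=1$ and a non-positive distributional dependence of $Y(0)$ on $\Pi$ given $A=0$. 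You instead compute $\nu_a(\pi)$ in closed form by Bayes' rule: the observation that the law of $U$ given $A=1,\Pi=\pi$ is $g(u)F(\d u)$, free of $\pi$, recovers the paper's $a=1$ conclusion directly, and your formula $\nu_0(\pi)=[\E\{Y(0)\}-\pi\E\{Y(0)g(U)\}]/(1-\pi)$ reduces the $a=0$ case to the single inequality $\cov\{Y(0),g(U)\}\geq 0$, which you obtain from an FKG argument since $g$ is coordinate-wise non-decreasing and $\OR_Y\geq 1$ is exactly the $2\times 2$ lattice (multivariate total positivity of order two) condition; strictly speaking FKG requires this lattice condition rather than positive quadrant dependence, but for a binary pair the two coincide, so your invocation is sound. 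Your route is more self-contained and makes explicit which covariance controls the sign, and it correctly recognizes that the $\theta^{Y(1)Y(0)}$ term prevents a black-box appeal to Theorem \ref{thm::general-iv-conf-multi}; the paper's route instead reuses its multiplicative-interaction lemma so that the proof runs parallel to that of Theorem \ref{thm::general-iv-conf-multi}. Either way both hypotheses of Theorem \ref{thm::general-iv-conf-weak} are verified and \eqref{eq::result} follows.
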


\section{Illustrations}
\label{sec::numerical-examples}

\subsection{Numerical Examples}

\citet{myers2011effects} simulated binary $(Z,U,A,Y)$ to investigate Z-Bias. They generated $(Z,U)$ according to $\pr(Z=1) = 0.5$ and $\pr(U=1) = \gamma_0$. The first set of their generative models is additive,
\begin{eqnarray}
\pr(A=1\mid U,Z) = \alpha_0+\alpha_1 U + \alpha_2 Z,\quad 
\pr(Y=1\mid U, A) = \beta_0 + \beta_1 U + \beta_2 A ,
\label{eq::additive}
\end{eqnarray}
where the coefficients are all positive. The second set of their generative models is multiplicative,
\begin{eqnarray}
\pr(A=1\mid U,Z) = \alpha_0 \alpha_1^U  \alpha_2^Z,\quad
\pr(Y=1\mid U, A) = \beta_0  \beta_1^U  \beta_2^A,
\label{eq::multi}
\end{eqnarray}
where the coefficients in \eqref{eq::additive} and \eqref{eq::multi} are all positive. They use simulation to show that Z-Bias arises under these models. In fact, in the above models, $Z$ and $U$ have monotonic effects on $A$ without additive or multiplicative interactions, and $U$ acts monotonically on $Y$, given $A$. Therefore, Corollaries \ref{thm::Zbias-binary-case} and \ref{thm::Zbias-binary-case-multi} imply that Z-Bias must occur. The qualitative conclusion follows immediately from our theory. However, our theory does not make statements about the magnitude of the bias, and for more details about the magnitude and finite sample properties, see \citet{myers2011effects}.

We further use three numerical examples to illustrate the role of the no-interaction assumptions required by Theorems \ref{thm::Zbias-scalar-case} and \ref{thm::Zbias-scalar-case-multi} and Corollaries \ref{thm::Zbias-binary-case} and \ref{thm::Zbias-binary-case-multi}. 
Recall the conditional probability of the treatment $A$, $p_{zu} = \pr(A=1\mid Z=z,U=u)$, and define the conditional probabilities of the outcome $Y$ as $r_{au} = \pr(Y=1\mid A=a, U=u)$, for $z,a,u=0,1.$ 
Table \ref{tb::examples} gives three examples, where monotonicity on the conditional distributions of $A$ and $Y$ hold, and there are both additive and multiplicative interactions. In all cases, the instrumental variable $Z$ is Bernoulli$(p=0.5)$, and the confounder $U$ is another independent Bernoulli$(\pi=0.5)$. In Case 1, the weaker condition \eqref{eq::weaker-conditions} holds, and our theory implies that Z-Bias arises. In Case 2, neither the condition in Theorem \ref{thm::Zbias-scalar-case-general} or \eqref{eq::weaker-conditions} holds, but Z-Bias still arises. Our conditions are only sufficient but not necessary. In Case 3, neither the condition in Theorem \ref{thm::Zbias-scalar-case-general} or \eqref{eq::weaker-conditions} holds, and Z-Bias does not arise.

\begin{table}[t]
\centering
\caption{Examples for the presence and absence of Z-Bias, in which $Z\sim$ Bernoulli$(0.5)$, $U\sim$ Bernoulli$(0.5)$, the conditional probability of the treatment $A$ is $p_{zu} =  \pr(A=1\mid Z=z,U=u) $, and the conditional probability of the outcome $Y$ is $r_{au} = \pr(Y=1\mid A=a, U=u)$. }\label{tb::examples}
\begin{tabular}{ccccccccccccc}
\hline
Case&$p_{11}$ & $p_{10}$ & $p_{01}$ & $p_{00}$ & $r_{11}$ & $r_{10} $ & $r_{01}$ & $r_{00}$ & $\ACE^\true$ & $\FACE$ & $\ACE^\adj$& Z-Bias \\
\hline
1&0.8 & 0.6& 0.2& 0.1& 0.08& 0.06& 0.02& 0.01& 0.0550 &0.0574& 0.0584 &YES\\
2&0.3 & 0.2&0.3&0.1&  0.03& 0.02& 0.03& 0.01&0.0050 &0.0076& 0.0077 &YES\\
3&0.5& 0.4&0.4& 0.1&0.04 & 0.04& 0.04& 0.01&0.0150 &0.0173& 0.0172&NO\\
\hline 
\end{tabular}
\end{table}

Finally, for binary $(Z,U,A,Y)$ we use Monte Carlo to compute the volume of the Z-Bias space, i.e., the parameter space of $p$, $\pi$, $p_{zu} $'s and $r_{au} $'s in which the adjusted estimator has higher bias than the unadjusted estimator. We randomly draw these ten probabilities from independent Uniform$(0,1)$ random variables, and for each draw of these probabilities we compute the average causal effect $\ACE^\true$, the unadjusted estimator $\FACE$ and the adjusted estimator $\ACE^\adj$. We plot the joint values of the biases $( \ACE^\adj - \ACE^\true,  \FACE - \ACE^\true )$ in Figure \ref{fg::numerical}.
The volume of the Z-Bias space can be approximated by the frequency that $\ACE^\adj$ deviates more from $\ACE^\true$ than $\FACE$. With $10^6$ random draws, our Monte Carlo gives an unbiased estimate for this volume as $0.6805$ with estimated standard error $0.0005$. Therefore, in about $68\%$ of the parameter space, the adjusted estimator is more biased than the unadjusted estimator.

\begin{figure}[!t]
\centering
\includegraphics[width = 0.7 \textwidth]{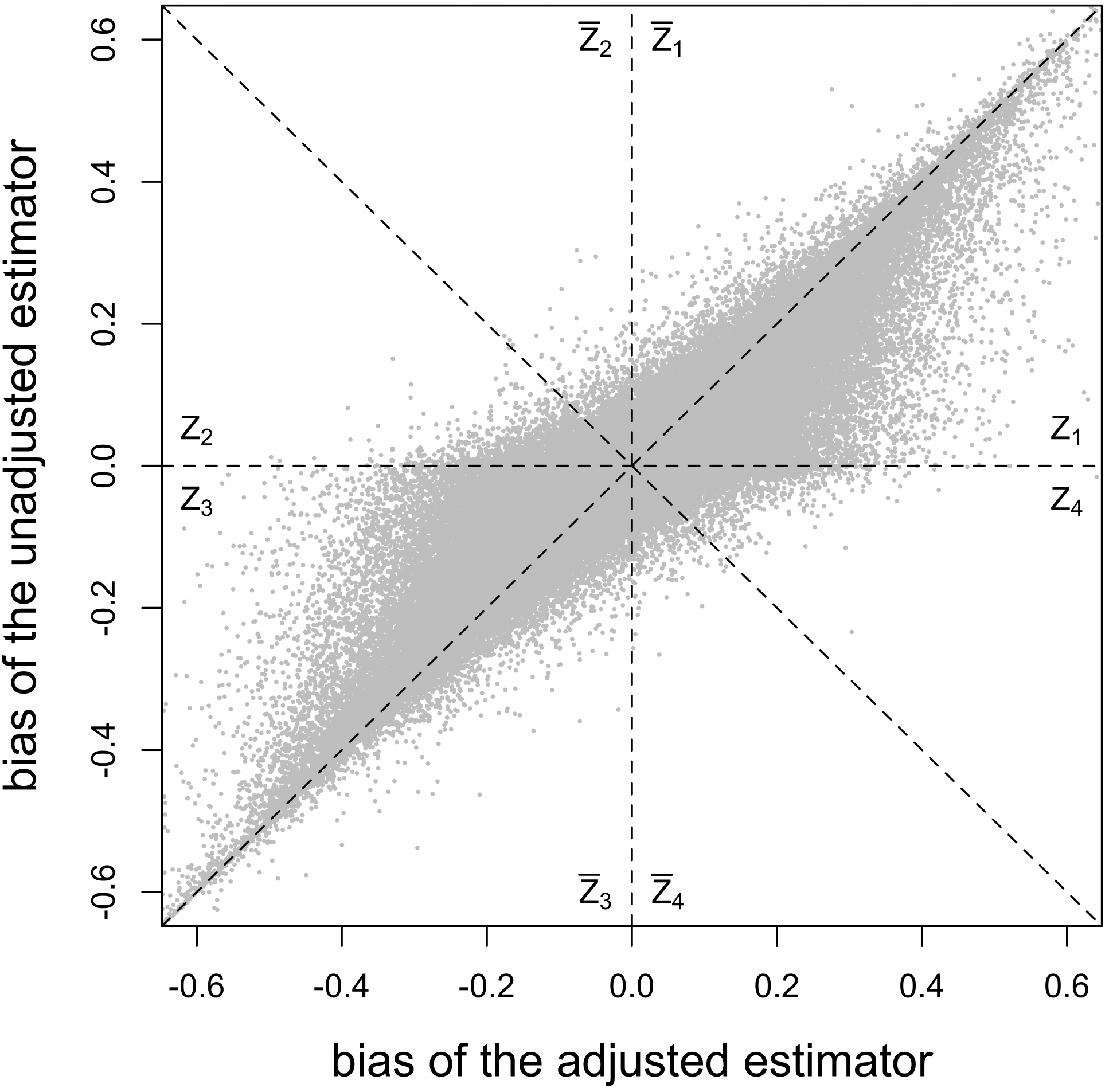}
\caption{Biases of the adjusted and unadjusted estimators over $10^6$ draws of the probabilities. In areas $(Z_1,Z_2,Z_3,Z_4)$ Z-Bias arises, and in areas $(\bar{Z}_1,\bar{Z}_2,\bar{Z}_3,\bar{Z}_4)$ Z-Bias does not arise.}\label{fg::numerical}
\end{figure}

\subsection{Real Data Examples}

\citet{bhattacharya2007instrumental} presented an example about the treatment effect of small classroom in the third grade on test scores for reading. Their instrumental variable analysis gave point estimate $8.73$ with standard error $2.01$. Without adjusting for the instrumental variable in the propensity score model, the point estimate was $6.00$ with estimated standard error $1.34$; adjusting for the instrumental variable, the point estimate was $2.97$ with estimated standard error $1.84$. The difference between the adjusted estimator and the instrumental variable estimator is larger than that between the unadjusted estimator and the instrumental variable estimator.

\citet[][Example 21.3]{wooldridge2010econometric} discusses estimating the effect of attaining at least seven years of education on fertility, with treatment $A$ being a binary indicator for at least seven years of education, outcome $Y$ being the number of living children, and instrumental variable $Z$ being a binary indicator if the woman was born in the first half of the year. Although the original data set of  \citet{wooldridge2010econometric} contains other variables, most of them are posttreatment variables, so we do not adjust for them in our analysis. 
The instrumental variable analysis gives point estimate $2.47$ with estimated standard error $0.59$. The unadjusted analysis gives point estimate $1.77$ with estimated standard error $0.07$. The adjusted analysis gives point estimate $1.76$ with estimated standard error $0.07$. Table \ref{tb::Real} summarizes the results. In this example, the adjusted and unadjusted estimators give similar results.


\begin{table}[t]
\centering
\caption{The example from \citet{wooldridge2010econometric}.}\label{tb::Real}
\begin{tabular}{ccccc}
  \hline
 & point estimate & standard error & lower confidence limit & upper confidence limit \\ 
  \hline
$\ACE^\true$ & 2.47 & 0.59 & 1.31 & 3.62 \\ 
$\FACE$ & 1.77 & 0.07 & 1.64 & 1.90 \\ 
$\ACE^\adj$ & 1.76 & 0.07 & 1.64 & 1.89 \\ 
   \hline
\end{tabular}
\end{table}

\section{Discussion}
\label{sec::discussion}

\subsection{Allowing for an Arrow from $Z$ to $Y$}

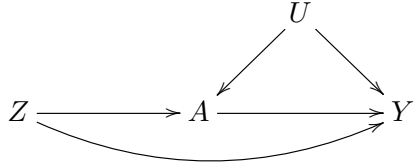
\begin{figure}[!t]
\centering
$$
\begin{xy}
\xymatrix{
& & & U \ar[dl] \ar[dr] \\
Z \ar[rr] \ar@/_1.5pc/[rrrr] & & A\ar[rr]  & & Y}
\end{xy}
$$
\caption{Directed Acyclic Graph for Z-Bias Allowing for an Arrow from $Z$ to $Y$}\label{fg::DAG-direct}
\end{figure}

When the variable $Z$ has an arrow to the outcome $Y$ as illustrated by Figure \ref{fg::DAG-direct}, the following generalization of Theorem \ref{thm::Zbias-scalar-case-general} holds.

\begin{theorem}
\label{thm::Zbias-scalar-case-general-direct}
Consider the causal diagram of Figure \ref{fg::DAG-direct} with scalar $Z$ and $U$, where $Z\ind U$ and $A\ind Y(a)\mid (Z,U)$ for $a=0$ and $1$. The result in \eqref{eq::result} holds if we replace Condition (a) of Theorem \ref{thm::Zbias-scalar-case-general} by
\begin{enumerate}
[(a')]
\item
$\pr(A=1\mid Z=z, U=u)$ and $\E(Y\mid A=a, Z=z, U=u)$ are non-decreasing in $z$ and $u$ for $a=0$ and $1$.
\end{enumerate}
\end{theorem}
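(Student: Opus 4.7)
The plan is to adapt the covariance-decomposition argument underlying Theorem~\ref{thm::Zbias-scalar-case-general} to the identification formula appropriate once the direct arrow $Z\to Y$ is present. Under the strengthened ignorability $A\ind Y(a)\mid (Z,U)$, each true causal contrast is identified by integrating $m_a^*(z,u) = \E(Y\mid A=a,Z=z,U=u)$ against the relevant distribution of $(Z,U)$; for instance $\ACE^\true = \int m_1^*\,\d F(z,u) - \int m_0^*\,\d F(z,u)$, with analogous formulas for $\ACE_1^\true$ and $\ACE_0^\true$ using $\d F(z,u\mid A=1)$ and $\d F(z,u\mid A=0)$.

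The first step is to write $\d F(z,u\mid A=a)$ in terms of $\pi(z,u)=\pr(A=1\mid Z=z,U=u)$ and $p=\pr(A=1)$, then collect terms to obtain
\begin{align*}
\FACE - \ACE^\true &= \tfrac{1}{p}\cov\{m_1^*(Z,U),\pi(Z,U)\} + \tfrac{1}{1-p}\cov\{m_0^*(Z,U),\pi(Z,U)\},\\
\FACE - \ACE_1^\true &= \tfrac{1}{p(1-p)}\cov\{m_0^*(Z,U),\pi(Z,U)\},\\
\FACE - \ACE_0^\true &= \tfrac{1}{p(1-p)}\cov\{m_1^*(Z,U),\pi(Z,U)\}.
\end{align*}
Under condition~(a') both $\pi$ and $m_a^*$ are coordinatewise non-decreasing in $(z,u)$; the causal diagram in Figure~\ref{fg::DAG-direct} still enforces $Z\ind U$, so the joint law of $(Z,U)$ is multivariate totally positive of order two. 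The Chebyshev/FKG association inequality then makes all three covariances non-negative, giving the lower inequalities in \eqref{eq::result}.

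For the upper inequalities $\ACE^\adj\geq\FACE$ and their treated and control analogues, I would run an identical decomposition in the single variable $z$. With $q(z)=\pr(A=1\mid Z=z) = \int \pi(z,u)\,\d F(u)$ and $\mu_a(z)=\E(Y\mid A=a,Z=z)$, one obtains
\begin{equation*}
\ACE^\adj - \FACE = -\tfrac{1}{p}\cov\{\mu_1(Z),q(Z)\} - \tfrac{1}{1-p}\cov\{\mu_0(Z),q(Z)\},
\end{equation*}
together with $\ACE_1^\adj - \FACE = -\tfrac{1}{p(1-p)}\cov\{\mu_0,q\}$ and $\ACE_0^\adj - \FACE = -\tfrac{1}{p(1-p)}\cov\{\mu_1,q\}$. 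Condition~(a') combined with $Z\ind U$ makes $q(z)$ non-decreasing in $z$, and condition~(b), inherited unchanged from Theorem~\ref{thm::Zbias-scalar-case-general}, forces $\mu_a(z)$ to be non-increasing; each covariance is therefore non-positive and the three upper inequalities close.

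The main obstacle is bookkeeping: for each of the six pairwise comparisons one must select the correct reference distribution so that the residual terms collapse into a single covariance. A more conceptual point is that condition~(b) is a substantive assumption about the collider bias induced at $A$, and its plausibility alongside the now-stronger condition~(a') follows the same heuristic sketched after Theorem~\ref{thm::Zbias-scalar-case-general}: because $\pi(z,u)$ is monotone in both arguments, conditioning on $A=a$ induces a negative association between $Z$ and $U$, which combined with the positive $U\to Y$ dependence can offset the direct positive effect of $Z$ on $Y$ and keep $\mu_a(z)$ non-increasing.
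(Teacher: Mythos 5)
Your proof is correct, and its second half (the upper inequalities) is exactly the paper's argument: the covariance identities you write for $\ACE_1^\adj-\FACE$, $\ACE_0^\adj-\FACE$ and $\ACE^\adj-\FACE$ are precisely Lemma \ref{lemma::adjust-formula}, the monotonicity of $q(z)=\pr(A=1\mid Z=z)$ follows from (a$'$) and $Z\ind U$ just as in the paper, and condition (b) plus the Chebyshev association inequality (Lemma \ref{lemma::epw1967}) closes those three comparisons. For the lower inequalities the paper takes a shortcut you did not: it treats $(Z,U)$ as a bivariate confounder with mutually independent components satisfying $A\ind Y(a)\mid (Z,U)$, notes that (a$'$) gives coordinatewise monotonicity of both $\pr(A=1\mid Z=z,U=u)$ and $\E(Y\mid A=a,Z=z,U=u)$, and then directly invokes Lemmas \ref{lemma::vw2008}--\ref{lemma::chiba2009-2} to conclude $\FACE\geq \ACE_1^\true,\ACE_0^\true,\ACE^\true$. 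You instead re-derive what those lemmas assert, by writing $\FACE-\ACE^\true = p^{-1}\cov\{m_1^*(Z,U),\pi(Z,U)\}+(1-p)^{-1}\cov\{m_0^*(Z,U),\pi(Z,U)\}$ and the analogous single-covariance identities for the treated and control contrasts, and then applying the association inequality for independent (hence multivariate totally positive of order two) components; I checked these identities and they are right. The trade-off is minor: the paper's citation route is shorter, while your explicit decomposition is self-contained and makes transparent exactly which covariance controls which of the three biases. Both are valid proofs of the same statement.
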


However, when there is an arrow from $Z$ to $Y$, Theorem \ref{thm::Zbias-scalar-case-general-direct} is of little use in practice without strong substantive knowledge about the size of the direct effect of $Z$ on $Y$. In particular, neither Theorem \ref{thm::Zbias-scalar-case} nor Theorem \ref{thm::Zbias-scalar-case-multi} is true when an arrow from $Z$ to $Y$ is present. This reflects the fact that neither the absence of an additive nor the absence of a multiplicative interaction of $Z$ and $U$ on $A$ is sufficient to conclude that $E(Y\mid A=a,Z=z)$ is non-increasing in $z$ when $E(Y\mid A=a,U=u,Z=z)$ is non-decreasing in $z$ and $u$.

With a general instrumental variable and a general confounder, Theorem \ref{thm::general-iv-conf-weak} holds without any assumptions on the underlying causal diagram, and therefore it holds even if the variable $Z$ affects the outcome directly. 
However, Theorems \ref{thm::general-iv-conf} and \ref{thm::general-iv-conf-multi} no longer hold if an arrow from $Z$ to $Y$ is present as in Figure \ref{fg::DAG-direct}. This reflects the fact that the absence of an additive or multiplicative interaction of $U$ and $\Pi$ on $A$ no longer implies $\cov\{ \Pi ,  \nu_a(\Pi) \} \leq 0$ when $Z$ has a direct effect on $Y$, even if the remaining conditions of Theorems \ref{thm::general-iv-conf} and \ref{thm::general-iv-conf-multi} hold. Analogously, Theorems \ref{thm::general-iv-conf} and \ref{thm::general-iv-conf-multi} no longer hold if there exits an unmeasured common cause of $Z$ and $Y$ on the causal diagram in Figure \ref{fg::DAG}, even if $Z$ has no direct effect on $Y$.


\subsection{Extensions}

In \S\S \ref{sec::notation}--\ref{sec::general}, we discussed Z-Bias for the average causal effects. We can extend the results to distributional causal effects for general outcomes \citep{ju2010criteria} and causal risk ratios for binary or positive outcomes. Moreover, the results in \S\S \ref{sec::notation}--\ref{sec::general} are conditional on or within the strata of observed covariates. Similar results hold for causal effects averaged over observed covariates. We give more details in the Supplementary Material. In this paper we have given sufficient conditions for the presence of Z-Bias; future work could consider sufficient conditions for the absence of Z-Bias.

\subsection{Conclusion}

It is often suggested that we should adjust for all pretreatment covariates in observational studies. However, we show that in a wide class of models satisfying certain monotonicity, adjusting for an instrumental variable actually amplifies the impact of the unmeasured treatment-outcome confounding, which results in more bias than the unadjusted estimator.
In practice, we may not be sure about whether a covariate is a confounder, for which one needs to control, or perhaps instead an instrumental variable, for which control would only increase any existing bias due to unmeasured confounding. Therefore, a more practical approach, as suggested by \citet[][Chapter 18.2]{rosenbaum2010designobservational} and \citet{brookhart2010confounding}, may be to conduct analysis both with and without adjusting for the covariate. If two analyses give similar results, as in the example in Table \ref{tb::Real}, then we need not worry about Z-Bias; otherwise, we need additional information and analysis before making decisions.

%

\section*{Acknowledgments}
Peng Ding is partially supported by the U.S. Institute of Education Sciences, and
Tyler J. VanderWeele by the U.S. National Institutes of Health. 
The authors thank the Associate Editor and two reviewers for detailed and helpful comments.

\section*{Supplementary material}
\label{SM}

Supplementary Material available at \Bka~online includes all the proofs and extensions.

\bibliographystyle{biometrika}
\bibliography{Zbias.bib}

\newpage  
\begin{center}
\bf \Large 
Supplementary material for ``Instrumental variables as bias amplifiers with general outcome and confounding''
\end{center}

\bigskip 

\renewcommand {\theequation}{S\arabic{equation}}
\renewcommand {\thelemma}{S\arabic{lemma}}
\renewcommand {\thecorollary}{S\arabic{corollary}}
\renewcommand {\thefigure}{S\arabic{figure}}
\renewcommand {\thetheorem}{S\arabic{theorem}}

\section*{Appendix 1.\quad  Lemmas and Their Proofs}

In order to prove the main results, we need to invoke the following lemmas. Some of them are from the literature, and some of them are new and of independent interest.

Lemma \ref{lemma::epw1967} is from \citet[][Theorem 2.1]{esary1967association}.

\begin{lemma}
\label{lemma::epw1967}
Let $f(\cdot)$ and $g(\cdot)$ be functions with $K$ real-valued arguments, which are both non-decreasing in each of their arguments. If $U = (U_1, \ldots, U_K)$ is a multivariate random variable with $K$ mutually independent components, then $\cov\{  f(U), g(U) \} \geq 0.$
\end{lemma}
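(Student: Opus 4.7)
The plan is to prove the lemma by induction on the dimension $K$, using the classical coupling trick for the base case and the law of total covariance for the inductive step.

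For the base case $K=1$, I would introduce an independent copy $U_1'$ of $U_1$ and exploit the identity
\begin{equation*}
2\cov\{f(U_1), g(U_1)\} = E\bigl[\{f(U_1)-f(U_1')\}\{g(U_1)-g(U_1')\}\bigr].
\end{equation*}
Since $f$ and $g$ are both non-decreasing, the two factors on the right-hand side always share the same sign, so the expectation is non-negative. This step is essentially a one-line computation and is the well-known Chebyshev-type argument for scalar monotone functions.

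For the inductive step, assuming the result for $K$ independent components, I would condition on $U_{K+1}$ and apply the law of total covariance,
\begin{equation*}
\cov\{f(U), g(U)\} = E\bigl[\cov\{f(U), g(U)\mid U_{K+1}\}\bigr] + \cov\bigl[E\{f(U)\mid U_{K+1}\}, E\{g(U)\mid U_{K+1}\}\bigr].
\end{equation*}
Given $U_{K+1}=u_{K+1}$, the maps $(u_1,\ldots,u_K)\mapsto f(u_1,\ldots,u_K,u_{K+1})$ and $(u_1,\ldots,u_K)\mapsto g(u_1,\ldots,u_K,u_{K+1})$ are each non-decreasing in their $K$ arguments, and $(U_1,\ldots,U_K)$ remain mutually independent by the independence of the components of $U$; hence the conditional covariance is non-negative by the induction hypothesis. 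For the second term, I would observe that $E\{f(U)\mid U_{K+1}=u_{K+1}\}$ is a non-decreasing function of $u_{K+1}$, because $f$ is non-decreasing in its last argument and the remaining components are integrated out against a distribution that does not depend on $u_{K+1}$; the same holds for $g$. Thus the second term is a covariance of two non-decreasing functions of the single random variable $U_{K+1}$, which is non-negative by the base case.

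The main obstacle, such as it is, lies in justifying the monotonicity of the conditional expectation $E\{f(U)\mid U_{K+1}=u_{K+1}\}$ without any regularity assumptions beyond monotonicity of $f$; this follows cleanly from independence by writing the conditional expectation as an integral of $f$ against the product of the marginal laws of $U_1,\ldots,U_K$, after which monotonicity in $u_{K+1}$ is inherited pointwise from $f$. All remaining steps are routine bookkeeping, so the induction carries through to yield the desired non-negativity of $\cov\{f(U), g(U)\}$ for every $K\geq 1$.
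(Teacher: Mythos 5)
Your proof is correct. The paper does not prove this lemma at all; it simply cites \citet[Theorem 2.1]{esary1967association}, and your argument --- the Chebyshev coupling identity $2\cov\{f(U_1),g(U_1)\}=E[\{f(U_1)-f(U_1')\}\{g(U_1)-g(U_1')\}]$ for the scalar base case, followed by induction via the law of total covariance, using independence both to keep the conditional law of $(U_1,\ldots,U_K)$ a product measure and to make $E\{f(U)\mid U_{K+1}=u_{K+1}\}$ pointwise non-decreasing --- is essentially the classical proof of that cited result (the association property of independent random variables). The only implicit hypothesis worth flagging is square-integrability of $f(U)$ and $g(U)$ so that the covariance and the cross terms in the coupling identity are finite, which is the standard standing assumption for this lemma.
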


Lemma \ref{lemma::vw2008} is from \citet{vanderweele2008sign}, and
Lemmas \ref{lemma::chiba2009-1} and \ref{lemma::chiba2009-2} are from \citet{chiba2009sign}. 

\begin{lemma}
\label{lemma::vw2008}
For a univariate $U$ or a multivariate $U$ with mutually independent components, if for $a=1$ and $0$,
$Y(a) \ind A\mid U$,
$\E(Y\mid A=a, U=u)$ is non-decreasing in each component of $u$, and
$\pr(A=1\mid U=u)$ is non-decreasing in each component of $u,$
then
$
\E(Y\mid A=1) \geq \E\{ Y (1 ) \}  
$ 
and
$
\E(Y\mid A=0) \leq \E\{ Y(0) \} .
$
\end{lemma}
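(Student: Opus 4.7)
The plan is to express each bias $\E(Y\mid A=a)-\E\{Y(a)\}$ as a covariance of two coordinatewise non-decreasing functions of $U$, and then sign that covariance by Lemma~\ref{lemma::epw1967}. The univariate case is subsumed by the multivariate one (a single coordinate is trivially independent), so I would argue directly in the multivariate setting.

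First I would invoke ignorability $Y(a)\ind A\mid U$ together with consistency $Y=AY(1)+(1-A)Y(0)$ to identify $\E(Y\mid A=a,U=u)=\E\{Y(a)\mid U=u\}=:m_a(u)$; in particular, the hypothesized coordinatewise monotonicity of $\E(Y\mid A=a,U=u)$ in $u$ transfers to $m_a$. Writing $\pi(u)=\pr(A=1\mid U=u)$, which is coordinatewise non-decreasing by hypothesis, a short calculation using iterated expectations gives
$$\E(Y\mid A=1)=\frac{\E\{m_1(U)\pi(U)\}}{\E\{\pi(U)\}},\qquad \E\{Y(1)\}=\E\{m_1(U)\},$$
and therefore
$$\E(Y\mid A=1)-\E\{Y(1)\}=\frac{\cov\{m_1(U),\pi(U)\}}{\E\{\pi(U)\}}.$$
Since $m_1$ and $\pi$ are each non-decreasing in every coordinate of $U$ and the coordinates of $U$ are mutually independent, Lemma~\ref{lemma::epw1967} gives $\cov\{m_1(U),\pi(U)\}\geq 0$, delivering the first inequality.

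The inequality for $a=0$ is symmetric: replacing $\pi$ by $1-\pi$ in the same manipulation yields
$$\E(Y\mid A=0)-\E\{Y(0)\}=-\frac{\cov\{m_0(U),\pi(U)\}}{1-\E\{\pi(U)\}},$$
and Lemma~\ref{lemma::epw1967} applied to the coordinatewise non-decreasing pair $(m_0,\pi)$ makes the right-hand side non-positive. I do not anticipate a genuine obstacle; once ignorability replaces the conditional outcome mean by $m_a(U)$, the whole argument reduces to the covariance representation of confounding bias, with Lemma~\ref{lemma::epw1967} supplying the sign.
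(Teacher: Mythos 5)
Your argument is correct: the identity $\E(Y\mid A=1)-\E\{Y(1)\}=\cov\{m_1(U),\pi(U)\}/\E\{\pi(U)\}$ follows from ignorability and Bayes' rule exactly as you write, and Lemma~\ref{lemma::epw1967} signs the covariance since $m_a$ and $\pi$ are coordinatewise non-decreasing and the components of $U$ are mutually independent; the $a=0$ case follows symmetrically with $1-\pi$. The paper does not prove this lemma itself but imports it from \citet{vanderweele2008sign}, and your covariance representation plus the association inequality is precisely the standard route (the same device the paper uses in Lemma~\ref{lemma::adjust-formula} for the adjusted-versus-unadjusted differences), so there is nothing to fault.
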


\begin{lemma}
\label{lemma::chiba2009-1}
For a univariate $U$ and a multivariate $U$ with mutually independent components, if
$Y(0) \ind A\mid U$,
$\E(Y\mid A=0, U=u)$ is non-decreasing in each component of $u$, and
$\pr(A=1\mid U=u)$ is non-decreasing in each component of $u,$
then
$
\E(Y\mid A=0) \leq \E\{ Y(0) \mid A=1 \} .
$
\end{lemma}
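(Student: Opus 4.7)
My plan is to reduce the stated inequality to a covariance inequality and then invoke Lemma~\ref{lemma::epw1967}. First, because $Y(0)\ind A\mid U$ and $Y=Y(0)$ whenever $A=0$, I have
$$
\E\{Y(0)\mid A=a,U=u\}=\E\{Y(0)\mid U=u\}=\E(Y\mid A=0,U=u)=m_0(u),
$$
for both $a=0,1$. Marginalizing over the conditional law of $U$ given $A=a$ gives
$\E(Y\mid A=0)=\int m_0(u)\,F(\d u\mid A=0)$ and $\E\{Y(0)\mid A=1\}=\int m_0(u)\,F(\d u\mid A=1)$.

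Next, write $\pi(u)=\pr(A=1\mid U=u)$, so that $F(\d u\mid A=1)=\pi(u)F(\d u)/\E\{\pi(U)\}$ and $F(\d u\mid A=0)=\{1-\pi(u)\}F(\d u)/\E\{1-\pi(U)\}$ by Bayes' rule. Substituting and simplifying the difference should telescope into a single covariance:
$$
\E\{Y(0)\mid A=1\}-\E(Y\mid A=0)=\frac{\cov\{m_0(U),\pi(U)\}}{\E\{\pi(U)\}\,\E\{1-\pi(U)\}}.
$$
I would verify this identity by expanding the two expectations, collecting the cross terms $\E[m_0(U)\pi(U)]\E[\pi(U)]$ that cancel, and recognizing the residual as $\E[m_0(U)\pi(U)]-\E[m_0(U)]\E[\pi(U)]$.

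Finally, by hypothesis both $m_0(\cdot)$ and $\pi(\cdot)$ are non-decreasing in each component of $u$. In the univariate case, the covariance is non-negative by the standard Chebyshev sum/FKG inequality for monotone functions of a single random variable; in the multivariate case with mutually independent components, non-negativity is exactly the content of Lemma~\ref{lemma::epw1967}. Since the denominator is strictly positive, the numerator's non-negativity yields $\E(Y\mid A=0)\leq\E\{Y(0)\mid A=1\}$.

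The calculation is essentially routine; the only delicate point is the Bayes substitution, which implicitly requires $0<\pr(A=1)<1$ so the conditional distributions $F(\d u\mid A=a)$ are well defined (and this is needed anyway for the statement $\E(Y\mid A=0)$ to make sense). No real obstacle arises, and this proof is structurally the same as the one for Lemma~\ref{lemma::vw2008}, differing only in which of $\E\{Y(0)\}$ or $\E\{Y(0)\mid A=1\}$ serves as the comparison baseline on the right-hand side.
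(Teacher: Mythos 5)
Your proof is correct. Note, however, that the paper itself gives no proof of this lemma: it is imported verbatim from \citet{chiba2009sign} (just as Lemma~\ref{lemma::vw2008} is imported from \citet{vanderweele2008sign}), so there is no in-paper argument to compare against. Your derivation is the natural self-contained one: the identity
$$
\E\{Y(0)\mid A=1\}-\E(Y\mid A=0)=\frac{\cov\{m_0(U),\pi(U)\}}{\E\{\pi(U)\}\,\E\{1-\pi(U)\}}
$$
checks out (expanding with $F(\d u\mid A=1)=\pi(u)F(\d u)/f$ and $F(\d u\mid A=0)=\{1-\pi(u)\}F(\d u)/(1-f)$ gives numerator $\E\{m_0(U)\pi(U)\}-\E\{m_0(U)\}\E\{\pi(U)\}$ exactly), the reduction of both sides to integrals of $m_0$ against the two conditional laws uses only consistency and $Y(0)\ind A\mid U$, and the sign of the covariance follows from Lemma~\ref{lemma::epw1967} in the multivariate independent-components case and from the Chebyshev/FKG inequality in the univariate case. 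This is also precisely the mechanism the paper itself uses elsewhere (compare Lemma~\ref{lemma::adjust-formula} and the proof of Theorem~\ref{thm::Zbias-scalar-case-general}, where differences of adjusted and unadjusted estimators are rewritten as covariances of monotone functions), so your argument is fully in the spirit of the paper even though the paper delegates this particular lemma to the literature.
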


\begin{lemma}
\label{lemma::chiba2009-2}
For a univariate $U$ and a multivariate $U$ with mutually independent components, if
$Y (1)  \ind A\mid U$, 
$\E(Y\mid A=1, U=u)$ is non-decreasing in each component of $u$, and
$\pr(A=1\mid U=u)$ is non-decreasing in each component of $u,$
then
$
\E(Y\mid A=1) \geq \E\{ Y(1) \mid A=0 \} .
$
\end{lemma}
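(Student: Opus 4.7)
The plan is to reduce the inequality $\E(Y\mid A=1)\geq \E\{Y(1)\mid A=0\}$ to a non-negativity statement about the covariance between two monotone functions of $U$, and then to invoke Lemma \ref{lemma::epw1967} (applied to the independent components of $U$). By consistency, $\E(Y\mid A=1)=\E\{Y(1)\mid A=1\}$, and by the ignorability assumption $Y(1)\ind A\mid U$ we have $\E(Y\mid A=1,U=u)=\E\{Y(1)\mid U=u\}$. Let $m(u)=\E\{Y(1)\mid U=u\}$ and $\pi(u)=\pr(A=1\mid U=u)$; by hypothesis both are non-decreasing in each component of $u$.

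Next, I would express the two expectations of interest as integrals against $F(\mathrm{d}u)$ using Bayes' rule:
\begin{equation*}
\E\{Y(1)\mid A=1\}=\frac{\int m(u)\pi(u)\,F(\mathrm{d}u)}{\pr(A=1)},\qquad
\E\{Y(1)\mid A=0\}=\frac{\int m(u)\{1-\pi(u)\}\,F(\mathrm{d}u)}{\pr(A=0)}.
\end{equation*}
Subtracting, using $\pr(A=0)+\pr(A=1)=1$ and $\pr(A=1)=\int \pi(u)\,F(\mathrm{d}u)=\E\{\pi(U)\}$, the cross terms telescope and one obtains
\begin{equation*}
\E\{Y(1)\mid A=1\}-\E\{Y(1)\mid A=0\}=\frac{\cov\{m(U),\pi(U)\}}{\pr(A=1)\pr(A=0)}.
\end{equation*}

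Since both $m$ and $\pi$ are non-decreasing in every coordinate and the components of $U$ are mutually independent (the univariate case being a special case), Lemma \ref{lemma::epw1967} gives $\cov\{m(U),\pi(U)\}\geq 0$, which yields the desired inequality.

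The main obstacle is mostly bookkeeping rather than a genuinely hard step: one must be careful that ignorability is invoked only for $Y(1)$ (not $Y(0)$, which is all that is assumed), and that the conditional expectation inside the integral is legitimately rewritten in terms of the marginal counterfactual mean $m(u)$ before multiplying by the selection weights. Beyond that, the reduction to a covariance between two coordinatewise-monotone functions of an independent-component vector makes Lemma \ref{lemma::epw1967} directly applicable, exactly mirroring the structure used in the proofs of Lemmas \ref{lemma::vw2008} and \ref{lemma::chiba2009-1}.
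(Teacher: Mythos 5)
Your proof is correct. The paper does not actually prove this lemma itself---it imports it from \citet{chiba2009sign}---but your argument (rewriting both conditional means via ignorability and Bayes' rule as $\pi$- and $(1-\pi)$-weighted integrals of $m(u)$, reducing the difference to $\cov\{m(U),\pi(U)\}/\{\pr(A=1)\pr(A=0)\}$, and invoking Lemma \ref{lemma::epw1967}) is the standard derivation and exactly parallels the covariance decomposition the paper uses in Lemma \ref{lemma::adjust-formula}.
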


Lemma \ref{lemma::interactions}, extending \citet{rothman2008modern}, states that under monotonicity, no additive interaction implies non-positive multiplicative interactions for both presence and absence of the outcome.

\begin{lemma}
\label{lemma::interactions}
If $p_{11} \geq \max(p_{10}, p_{01})$, $\min(p_{10}, p_{01}) \geq p_{00} >0$, and $p_{11}-p_{10}-p_{01}+p_{00} = 0$, then
\begin{eqnarray}\label{eq::weaker-conditions-supp}
\frac{p_{11}p_{00}}{p_{10}p_{01}} \leq 1, \quad 
\frac{ (1-p_{11}) (1-p_{00}) }{ (1-p_{10})(1-p_{01}) } \leq 1.
\end{eqnarray} 
\end{lemma}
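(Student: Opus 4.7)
The plan is to reduce both inequalities in \eqref{eq::weaker-conditions-supp} to a single algebraic identity that is forced by the no-additive-interaction hypothesis, and then to read off the sign of each factor from the monotonicity hypotheses.

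First I would substitute $p_{11} = p_{10} + p_{01} - p_{00}$ (from $p_{11}-p_{10}-p_{01}+p_{00}=0$) into $p_{11}p_{00} - p_{10}p_{01}$ and collect terms. The resulting expression factors as $-(p_{10}-p_{00})(p_{01}-p_{00})$. The monotonicity hypotheses $p_{10}\geq p_{00}$ and $p_{01}\geq p_{00}$ make both parenthesized factors non-negative, so the product is non-negative and $p_{11}p_{00}-p_{10}p_{01}\leq 0$. Dividing by $p_{10}p_{01}>0$ (positive because of the positivity of $p_{00}$ together with the monotonicity chain $p_{10},p_{01}\geq p_{00}>0$) gives the first inequality in \eqref{eq::weaker-conditions-supp}.

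For the second inequality, I would set $q_{zu}=1-p_{zu}$ and note that the no-additive-interaction identity is preserved under this reflection, $q_{11}-q_{10}-q_{01}+q_{00}=-(p_{11}-p_{10}-p_{01}+p_{00})=0$, while the monotonicity reverses to $q_{11}\leq \min(q_{10},q_{01})$ and $\max(q_{10},q_{01})\leq q_{00}$. Repeating the same factoring with $q$ in place of $p$ yields $q_{11}q_{00}-q_{10}q_{01}=-(q_{10}-q_{00})(q_{01}-q_{00})$. Now each factor $(q_{10}-q_{00})$ and $(q_{01}-q_{00})$ is non-positive, so their product is non-negative, and hence $q_{11}q_{00}-q_{10}q_{01}\leq 0$. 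Provided $q_{10}q_{01}>0$, this is the second inequality in \eqref{eq::weaker-conditions-supp}; if instead $q_{10}=0$ or $q_{01}=0$, then monotonicity forces $q_{11}=0$ and the inequality holds trivially (interpreting the ratio as $\leq 1$ in the appropriate limit).

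No step is a serious obstacle; the only subtlety is checking that the denominators are positive so the bounds can be stated as ratios, which follows from $p_{00}>0$ for the first inequality and a short case analysis for the second. The heart of the argument is the single identity $ab - cd = -(c - a^{\ast})(d - a^{\ast})$ that holds whenever $b = c + d - a^{\ast}$, which converts the no-additive-interaction hypothesis directly into a sign statement about the multiplicative interaction.
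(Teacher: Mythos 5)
Your proof is correct and is essentially the paper's argument: your identity $p_{11}p_{00}-p_{10}p_{01}=-(p_{10}-p_{00})(p_{01}-p_{00})$ is exactly the paper's factorization $\RR_{11}-\RR_{10}\RR_{01}=-(\RR_{10}-1)(\RR_{01}-1)$ after dividing through by $p_{00}^2$, and your reflection $p_{zu}\mapsto 1-p_{zu}$ for the second inequality does the same work as the paper's observation that $(1-p_{11})(1-p_{00})-(1-p_{10})(1-p_{01})=p_{11}p_{00}-p_{10}p_{01}$ under the no-additive-interaction constraint. The only difference is your explicit treatment of the degenerate case $p_{10}=1$ or $p_{01}=1$, a small extra care that the paper's proof silently omits.
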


\begin{proof}
[of Lemma \ref{lemma::interactions}]
Define $\RR_{11} = p_{11}/p_{00} \geq 1$, $\RR_{10} = p_{10} / p_{00} \geq 1$ and $\RR_{01} = p_{01} / p_{00} \geq 1$. Then 
$p_{11}-p_{10}-p_{01}+p_{00} = 0$ implies
$
\RR_{11} = \RR_{10} + \RR_{01} - 1,
$
which further implies
\begin{eqnarray*}
\frac{p_{11}p_{00}}{p_{10}p_{01}}  
&=& \frac{\RR_{11}}{ \RR_{10} \RR_{01}}  = 1 + \frac{1}{ \RR_{10} \RR_{01}  }   (   \RR_{11}  - \RR_{10} \RR_{01}) \\
&=& 1 + \frac{1}{ \RR_{10} \RR_{01}  }   (   \RR_{10} + \RR_{01} - 1 - \RR_{10} \RR_{01}) \\
&=& 1 - \frac{1}{ \RR_{10} \RR_{01}  }   (   \RR_{10} -1) ( \RR_{01} - 1) \leq 1.
\end{eqnarray*}

The second inequality of \eqref{eq::weaker-conditions-supp} follows from
\begin{eqnarray*}
&&\frac{ (1-p_{11}) (1-p_{00}) }{ (1-p_{10})(1-p_{01}) } 
= 
1+  \frac{ (1-p_{11}) (1-p_{00}) - (1-p_{10})(1-p_{01})  }{ (1-p_{10})(1-p_{01}) } \\
&=& 1 + \frac{1}{(1-p_{10})(1-p_{01}) } \left\{  (   1-p_{11}-p_{00}+p_{11}p_{00}       )
- (1-p_{10} - p_{01} + p_{10}p_{01})
\right\} \\
&=&1 + \frac{1}{(1-p_{10})(1-p_{01}) } (p_{11}p_{00} - p_{10}p_{01}) \\
&=& 1 + \frac{p_{10}p_{01}}{(1-p_{10})(1-p_{01}) } \left(  \frac{p_{11}p_{00}}{p_{10}p_{01}}  - 1 \right) \leq 1.
\end{eqnarray*}
\end{proof}

Lemma \ref{lemma::interactions} is about interaction between two binary causes, and for our discussion we need to extend it to interaction between two general causes. Lemma \ref{lemma::general-interaction} extends \citet{Piegorsch::1994} and \citet{Yang::1999} by relating the conditional association between two independent causes given the outcome to the interaction between the two causes on the outcome.

\begin{lemma}
\label{lemma::general-interaction}
If $Z\ind U$, and $\pr(A=1\mid Z=z, U=u) =  \beta(z) +  \gamma(u)$ with $\beta(z)$ and $\gamma(u)$ non-decreasing in $z$ and $u$, then for both $a=1$ and $0$ and for all values of $u$ and $z$,
$$
\frac{\partial F( u\mid A=a, Z=z)}{\partial z}  \geq  0 ,
$$
i.e., $U$ has non-positive distributional dependence on $Z$, given $A$.
\end{lemma}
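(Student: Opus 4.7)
The plan is to use Bayes' rule together with $Z \ind U$ to obtain an explicit formula for $F(u \mid A=a, Z=z)$, and then reduce the sign of its $z$-derivative to a one-variable monotonicity inequality for $\gamma$. Set $c = E\{\gamma(U)\}$ and $H(u) = \int_{-\infty}^u \gamma(u')\,dF(u')$. Because $Z \ind U$, the additive model gives $\pr(A=1 \mid Z=z) = \beta(z) + c$ and $\pr(A=1,\, U \leq u \mid Z=z) = \beta(z) F(u) + H(u)$, hence
$$F(u \mid A=1, Z=z) = \frac{\beta(z) F(u) + H(u)}{\beta(z) + c}.$$

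Next I would differentiate in $z$ (or, more generally, take a finite increment between any $z_1 < z_2$). The quotient rule, after cancelling the $\beta(z) F(u)$ cross terms in the numerator, yields
$$\frac{\partial F(u \mid A=1, Z=z)}{\partial z} = \frac{\beta'(z)\,\{c F(u) - H(u)\}}{\{\beta(z) + c\}^2}.$$
The factor $\beta'(z)$ is non-negative by monotonicity of $\beta$, and the denominator is positive, so everything reduces to verifying $c F(u) - H(u) \geq 0$. This I would check by rewriting
$$c F(u) - H(u) = F(u)\bigl[E\{\gamma(U)\} - E\{\gamma(U) \mid U \leq u\}\bigr] = F(u)\{1 - F(u)\}\bigl[E\{\gamma(U) \mid U > u\} - E\{\gamma(U) \mid U \leq u\}\bigr],$$
which is non-negative because $\gamma$ is non-decreasing.

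For $a = 0$, the parallel computation with $1 - \beta(z) - \gamma(u)$ in place of $\beta(z) + \gamma(u)$ gives $\pr(A=0,\, U \leq u \mid Z=z) = (1 - \beta(z)) F(u) - H(u)$ and $\pr(A=0 \mid Z=z) = 1 - \beta(z) - c$, and the same algebra produces
$$\frac{\partial F(u \mid A=0, Z=z)}{\partial z} = \frac{\beta'(z)\,\{c F(u) - H(u)\}}{\{1 - \beta(z) - c\}^2},$$
so exactly the same non-negative numerator appears and the conclusion follows.

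The main obstacle is really only bookkeeping: $\beta$ is not assumed differentiable, so $\partial/\partial z$ should be read as a non-negative finite difference across any $z_1 < z_2$, and the marginal law of $U$ may have atoms, so integrals should be treated in the Lebesgue-Stieltjes sense. Neither point affects the argument, since $\pr(A=a,\, U \leq u \mid Z=z)$ is affine in $\beta(z)$, and the sign of the corresponding increment in $F(u \mid A=a, Z=z)$ depends only on $\beta(z_2) - \beta(z_1) \geq 0$.
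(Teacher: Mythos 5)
Your proof is correct, but it takes a genuinely different and more direct route than the paper. The paper's proof discretizes $U$ at the threshold $u$ into the two events $\{U>u\}$ and $\{U\leq u\}$, forms the four probabilities $p_{zu'}=\pr(A=1\mid U \gtrless u, Z=z)$ for a pair $z_1>z_0$, verifies that they satisfy monotonicity and no additive interaction, invokes the two-by-two interaction lemma (Lemma \ref{lemma::interactions}) to get non-positive multiplicative interaction for both $A=1$ and $A=0$, and finally converts the resulting odds-ratio inequalities into monotonicity of $F(u\mid A=a,Z=z)$ in $z$ via the identity $F(u\mid A=a,Z=z)=\bigl[1+\tfrac{\pr(U>u)}{\pr(U\leq u)}\cdot\tfrac{\pr(A=a\mid U>u,Z=z)}{\pr(A=a\mid U\leq u,Z=z)}\bigr]^{-1}$. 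You instead write the conditional distribution in closed form, $F(u\mid A=1,Z=z)=\{\beta(z)F(u)+H(u)\}/\{\beta(z)+c\}$, observe that this is a M\"obius function of $t=\beta(z)$ whose monotonicity in $t$ is governed by the single quantity $cF(u)-H(u)=F(u)\{1-F(u)\}\bigl[E\{\gamma(U)\mid U>u\}-E\{\gamma(U)\mid U\leq u\}\bigr]\geq 0$, and note that the same numerator appears for $a=0$. This is shorter, self-contained (it does not route through Lemma \ref{lemma::interactions}), and isolates the essential inequality as a one-line consequence of $\gamma$ being non-decreasing; your closing remark correctly handles non-differentiable $\beta$ by reading the derivative as a finite difference, which is legitimate precisely because the numerator and denominator are affine in $\beta(z)$. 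What the paper's longer route buys is uniformity: the same $p_{zu}$ scaffolding is reused verbatim for the multiplicative model (Lemma \ref{lemma::inter-multi-general}) and for the binary corollaries, and it makes explicit the paper's recurring theme that no interaction plus monotonicity forces a negative collider association. The only point you share with the paper in leaving implicit is the positivity requirement $0<\beta(z)+c<1$ needed for the conditioning events to have positive probability (the paper's Lemma \ref{lemma::interactions} likewise assumes $p_{00}>0$); this is a harmless regularity condition, not a gap.
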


\begin{proof}
[of Lemma \ref{lemma::general-interaction}]
For a fixed $u$ and $z_1 > z_0$, we define
\begin{eqnarray*}
p_{11} &=& \pr(A=1\mid U>u, Z=z_1) = \int_u^\infty \{  \beta(z_1) +  \gamma(u')\} F(\d u')/\{ 1-F(u) \}   ,\\
p_{10} &=& \pr(A=1\mid U>u, Z=z_0) = \int_u^\infty \{  \beta(z_0) +  \gamma(u')\} F(\d u')/\{ 1-F(u) \}   ,\\
p_{01} &=& \pr(A=1\mid U\leq u, Z=z_1) = \int^u_{-\infty}  \{  \beta(z_1) +  \gamma(u')\} F(\d u')/ F(u)    ,\\
p_{00} &=& \pr(A=1\mid U\leq u, Z=z_0) = \int^u_{-\infty}  \{  \beta(z_0) +  \gamma(u')\} F(\d u')/ F(u)  ,
\end{eqnarray*}
following from the additive model of $A$ and $Z\ind U.$

Because $\beta(z_1)\geq \beta(z_0),$ it is straightforward to show that $p_{11} \geq  p_{10}$ and $p_{01}\geq p_{00}$. Because $\gamma(u)$ is increasing in $u$, we have
$$
p_{11} \geq \beta(z_1) + \gamma(u),\quad
p_{10} \geq \beta(z_0) + \gamma(u),\quad
p_{01} \leq  \beta(z_1) + \gamma(u),\quad
p_{00} \leq  \beta(z_0) + \gamma(u),
$$
which imply
$
p_{11}  \geq  p_{01} 
$
and
$
p_{10} \geq p_{00}. 
$
We further have
\begin{eqnarray*}
&&p_{11} - p_{10} - p_{01} + p_{00} \\
&=& \int_u^\infty \{  \beta(z_1)  - \beta(z_0 )\} F(\d u')/\{ 1-F(u) \} 
- \int^u_{-\infty}  \{  \beta(z_1)  - \beta(z_0)  \} F(\d u')/ F(u)  \\
&=& 0.
\end{eqnarray*}
The four probabilities $(p_{11}, p_{10}, p_{01}, p_{00})$ satisfy the conditions in Lemma \ref{lemma::interactions}, Therefore, \eqref{eq::weaker-conditions} holds.
Replacing the probabilities in \eqref{eq::weaker-conditions} by their definitions above, we have
\begin{eqnarray*}
&&\frac{   \pr(A=1\mid U>u, Z=z_1)  \pr(A=1\mid U\leq u, Z=z_0)   }
{    \pr(A=1\mid U>u, Z=z_0)    \pr(A=1\mid U\leq u, Z=z_1)    }  \leq  1 \\
&\Longleftrightarrow&
\frac{\pr(A=1\mid U>u, z_1)}{ \pr(A=1\mid U\leq u, z_1)  } 
\leq  \frac{\pr(A=1\mid U>u, z_0)}{ \pr(A=1\mid U\leq u, z_0)  } ,
\end{eqnarray*}
and
\begin{eqnarray*}
&&\frac{    \pr(A=0\mid U>u, Z=z_1)   \pr(A=0\mid U\leq u, Z=z_0)    }
{    \pr(A=0\mid U>u, Z=z_0)    \pr(A=0\mid U\leq u, Z=z_1)    } \leq 1 \\
&\Longleftrightarrow&
\frac{\pr(A=0 \mid U>u, z_1 )}{ \pr(A=0 \mid U\leq u, z_1 )  } 
\leq   \frac{\pr(A=0 \mid U>u, z_0 )}{ \pr(A=0 \mid U\leq u, z_0 )  } . 
\end{eqnarray*}
%
%
%
%
%
%
%
%
%
%
%
Therefore, for both $a=1$ and $0$ and for all values of $u$,
\begin{eqnarray}
\label{eq::distributional-interaction}
\frac{\pr(A=a\mid U>u, Z=z)}{ \pr(A=a\mid U\leq u, Z=z)  }
\end{eqnarray}
is non-increasing in $z$.
Because of the independence of $Z$ and $U$, we have
\begin{eqnarray*}
&&F(u\mid A=a, Z=z) \\
&=& \frac{ \pr(U\leq u,  A=a\mid  Z=z) }{   \pr(  A=a\mid  Z=z)   }\\
&=& \frac{ \pr(U\leq u) \pr  ( A=a\mid U\leq u,  Z=z) }{   \pr(U\leq u) \pr  ( A=a\mid U\leq u,  Z=z)
+   \pr(U >  u) \pr  ( A=a\mid U> u,  Z=z)  }\\
&=&  \left\{    1 + \frac{  \pr(U >  u)   }{ \pr(U\leq u) }  \times 
\frac{ \pr  ( A=a\mid U> u,  Z=z)   }{ \pr  ( A=a\mid U\leq u,  Z=z)  }       \right\}^{-1}.
\end{eqnarray*}
Therefore, $F( u\mid A=a, Z=z)$ is a non-increasing function of \eqref{eq::distributional-interaction}, and the conclusion holds. 
\end{proof}

Lemmas \ref{lemma::interactions} and \ref{lemma::general-interaction} above hold under the assumption of no additive interaction, and the following two lemmas state similar results under the assumption of no multiplicative interaction.

\begin{lemma}
\label{lemma::inter-multi}
If $p_{11}\geq \max(p_{10}, p_{01}), \min(p_{10}, p_{01} ) \geq p_{00}$, and $p_{11}p_{00} = p_{10}p_{01}$, then 
$$
p_{11} - p_{10} - p_{01} + p_{00} \geq 0,\quad 
\frac{(1-p_{11}) (1-p_{00}) }{  (1-p_{10}) (1-p_{01}) } \leq 1.
$$
\end{lemma}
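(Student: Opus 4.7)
The plan is to reduce the multiplicative no-interaction constraint $p_{11}p_{00} = p_{10}p_{01}$ to a simple relationship among relative risks, mirroring the technique used in the proof of Lemma \ref{lemma::interactions}. Set $\RR_{11} = p_{11}/p_{00}$, $\RR_{10} = p_{10}/p_{00}$, $\RR_{01} = p_{01}/p_{00}$; the monotonicity assumptions $p_{11}\geq \max(p_{10},p_{01})$ and $\min(p_{10},p_{01}) \geq p_{00}$ give $\RR_{11}\geq \max(\RR_{10},\RR_{01})$ and $\min(\RR_{10},\RR_{01})\geq 1$, while the multiplicative constraint becomes the clean identity $\RR_{11} = \RR_{10}\RR_{01}$.

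For the first inequality, I would factor the additive interaction directly:
$$
p_{11}-p_{10}-p_{01}+p_{00} = p_{00}(\RR_{11} - \RR_{10} - \RR_{01} + 1) = p_{00}(\RR_{10}\RR_{01} - \RR_{10} - \RR_{01} + 1) = p_{00}(\RR_{10}-1)(\RR_{01}-1),
$$
which is non-negative since each $\RR_{zu}\geq 1$. This shows that no multiplicative interaction together with monotonicity forces non-negative additive interaction.

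For the second inequality, the idea is to exploit the fact that subtracting $(1-p_{10})(1-p_{01})$ from $(1-p_{11})(1-p_{00})$ makes the product terms cancel under the multiplicative hypothesis. Specifically, expanding,
$$
(1-p_{11})(1-p_{00}) - (1-p_{10})(1-p_{01}) = -(p_{11}+p_{00}) + (p_{10}+p_{01}) + (p_{11}p_{00} - p_{10}p_{01}),
$$
and the last term vanishes by assumption, leaving $-(p_{11}-p_{10}-p_{01}+p_{00})$, which is $\leq 0$ by the first part. Dividing by $(1-p_{10})(1-p_{01})$ (which is strictly positive whenever $p_{10},p_{01}<1$, an implicit regularity assumption; if either equals $1$ then monotonicity forces $p_{11}=1$ and the second inequality reduces to $0\leq 0$ after reinterpreting the ratio as a limit) yields the claim.

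There is no real obstacle here: the argument is pure algebra once the correct reparameterization is chosen. The only delicate point is handling potential zeros in the denominators; this is resolved by observing that under the stated monotonicity, if $p_{10}=1$ or $p_{01}=1$ then $p_{11}=1$ as well, and the stated inequality degenerates to an equality.
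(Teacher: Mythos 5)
Your proof is correct and follows essentially the same route as the paper's: the same relative-risk reparameterization $\RR_{zu}=p_{zu}/p_{00}$ turning $p_{11}p_{00}=p_{10}p_{01}$ into $\RR_{11}=\RR_{10}\RR_{01}$ and the factorization $p_{00}(\RR_{10}-1)(\RR_{01}-1)\geq 0$ for the first claim, and the same expansion showing the second ratio equals $1$ minus the (non-negative) additive interaction over $(1-p_{10})(1-p_{01})$. Your explicit handling of the degenerate cases $p_{10}=1$ or $p_{01}=1$ (and the implicit $p_{00}>0$) is a minor bonus the paper leaves tacit.
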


\begin{proof}
[of Lemma \ref{lemma::inter-multi}]
Using the same notation in the proof of Lemma \ref{lemma::interactions}, $p_{11}p_{00} = p_{10}p_{01}$ implies $\RR_{11} = \RR_{10}\RR_{01}$, with
$\RR_{10} \geq 1,\RR_{01} \geq 1$, and $ \RR_{11}\geq 1.$ Therefore, 
\begin{eqnarray*}
p_{11} - p_{10} - p_{01} + p_{00} 
= p_{00}(\RR_{10}\RR_{01}- \RR_{10} - \RR_{01} + 1) 
=p_{00} (\RR_{10} - 1) (\RR_{01} - 1) \geq 0,
\end{eqnarray*}
which further implies that
\begin{eqnarray*}
\frac{(1-p_{11}) (1-p_{00}) }{  (1-p_{10}) (1-p_{01}) } 
&=&1  + \frac{1}{  (1-p_{10}) (1-p_{01})  }  \left\{  
(1-p_{11}) (1-p_{00}) -   (1-p_{10}) (1-p_{01})
\right\} \\
&=& 1  -  \frac{   p_{11} - p_{10} - p_{01} + p_{00}   }{  (1-p_{10}) (1-p_{01})  }  \leq 1.
\end{eqnarray*}
\end{proof}

\begin{lemma}
\label{lemma::inter-multi-general}
If $Z\ind U$, and $\pr(A=1\mid Z=z, U=u) =  \beta(z) \gamma(u)$ with $\beta(z)>0$ and $\gamma(u)>0$ non-decreasing in $z$ and $u$, then $Z\ind U\mid A=1$, and for all values of $u$ and $z$,
$$
\frac{\partial F( u\mid A=0, Z=z)}{\partial z}  \geq  0   ,
$$
i.e., $U$ has non-positive distributional dependence on $Z$, given $A=0$.
\end{lemma}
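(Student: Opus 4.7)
The plan is to prove the two claims separately, leveraging the multiplicative structure for the first and mimicking the argument of Lemma \ref{lemma::general-interaction} for the second, with Lemma \ref{lemma::inter-multi} playing the role that Lemma \ref{lemma::interactions} played before.

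For the conditional independence $Z\ind U\mid A=1$, the joint density of $(Z,U)$ given $A=1$ is proportional to $\pr(A=1\mid Z=z,U=u)f_Z(z)f_U(u) = \beta(z)\gamma(u)f_Z(z)f_U(u)$ by $Z\ind U$. Since this factorizes as a product of a function of $z$ and a function of $u$, conditional independence follows at once.

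For the distributional inequality, I would fix $u$ and $z_1>z_0$ and introduce the four probabilities
\begin{eqnarray*}
p_{11}&=&\pr(A=1\mid U>u,Z=z_1)=\beta(z_1)\E\{\gamma(U)\mid U>u\},\\
p_{10}&=&\pr(A=1\mid U>u,Z=z_0)=\beta(z_0)\E\{\gamma(U)\mid U>u\},\\
p_{01}&=&\pr(A=1\mid U\leq u,Z=z_1)=\beta(z_1)\E\{\gamma(U)\mid U\leq u\},\\
p_{00}&=&\pr(A=1\mid U\leq u,Z=z_0)=\beta(z_0)\E\{\gamma(U)\mid U\leq u\},
\end{eqnarray*}
where $Z\ind U$ and the multiplicative model produce the factorizations. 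Monotonicity of $\beta$ gives $p_{11}\geq p_{10}$ and $p_{01}\geq p_{00}$, while monotonicity of $\gamma$ gives $p_{11}\geq p_{01}$ and $p_{10}\geq p_{00}$; and the products cancel symmetrically, so $p_{11}p_{00}=p_{10}p_{01}$. Thus the $p_{zu}$'s satisfy the hypotheses of Lemma \ref{lemma::inter-multi}, yielding $(1-p_{11})(1-p_{00})/\{(1-p_{10})(1-p_{01})\}\leq 1$.

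Unpacking this inequality, exactly as in the proof of Lemma \ref{lemma::general-interaction}, gives
$$
\frac{\pr(A=0\mid U>u, Z=z_1)}{\pr(A=0\mid U\leq u, Z=z_1)} \leq \frac{\pr(A=0\mid U>u, Z=z_0)}{\pr(A=0\mid U\leq u, Z=z_0)},
$$
so this ratio is non-increasing in $z$. Then, using $Z\ind U$, I would write
$$
F(u\mid A=0, Z=z) = \left\{1 + \frac{\pr(U>u)}{\pr(U\leq u)}\cdot\frac{\pr(A=0\mid U>u, Z=z)}{\pr(A=0\mid U\leq u, Z=z)}\right\}^{-1},
$$
which is a non-increasing function of the above ratio and hence non-decreasing in $z$, establishing the claim. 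The main obstacle is purely organizational: the multiplicative model is asymmetric between $A=1$ and $A=0$, so one gets a stronger conclusion (exact conditional independence) on the $A=1$ side and only a distributional inequality on the $A=0$ side, and the analogue of Lemma \ref{lemma::inter-multi} naturally yields the inequality on the complementary probabilities, which is precisely what the $A=0$ case needs.
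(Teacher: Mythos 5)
Your proof is correct and follows essentially the same route as the paper's: the same four probabilities $p_{11}, p_{10}, p_{01}, p_{00}$, the same appeal to Lemma \ref{lemma::inter-multi}, and the same representation of $F(u\mid A=0, Z=z)$ as a non-increasing function of the conditional odds $\pr(A=0\mid U>u, Z=z)/\pr(A=0\mid U\leq u, Z=z)$. The only (minor) difference is that you establish $Z\ind U\mid A=1$ directly by factorizing the joint density of $(Z,U)$ given $A=1$, whereas the paper deduces it from the dichotomized odds ratio equalling one for every cut point; your route is slightly more direct for that sub-claim.
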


\begin{proof}
[of Lemma \ref{lemma::inter-multi-general}]
For a fixed $u$ and $z_1>z_0$, we define
\begin{eqnarray*}
p_{11} &=& \pr(A=1\mid U>u, Z=z_1) = \beta(z_1)   \int_u^\infty   \gamma(u')  F(\d u')/\{ 1-F(u) \}   ,\\
p_{10} &=& \pr(A=1\mid U>u, Z=z_0) = \beta(z_0)   \int_u^\infty     \gamma(u')  F(\d u')/\{ 1-F(u) \}   ,\\
p_{01} &=& \pr(A=1\mid U\leq u, Z=z_1) =  \beta(z_1)  \int^u_{-\infty}   \gamma(u')  F(\d u')/ F(u)    ,\\
p_{00} &=& \pr(A=1\mid U\leq u, Z=z_0) =  \beta(z_0)  \int^u_{-\infty} \gamma(u')  F(\d u')/ F(u)  ,
\end{eqnarray*}
following from the multiplicative model of $A$ and $Z\ind U.$
Because $\beta(z_1)\geq \beta(z_0),$ we have $p_{11} \geq  p_{10}$ and $p_{01}\geq p_{00}$. Because $\gamma(u)$ is increasing in $u$, we have
$$
p_{11} \geq \beta(z_1)  \gamma(u),\quad
p_{10} \geq \beta(z_0)  \gamma(u),\quad
p_{01} \leq  \beta(z_1)  \gamma(u),\quad
p_{00} \leq  \beta(z_0)  \gamma(u),
$$
which imply
$
p_{11}  \geq  p_{01} 
$
and
$
p_{10} \geq p_{00}. 
$
We can further verify $ ( p_{11} p_{00} ) /  ( p_{10}  p_{01} ) = 1 .$
Because the four probabilities $(p_{11}, p_{10}, p_{01}, p_{00})$ satisfy the conditions in Lemma \ref{lemma::inter-multi}, we have
$
\{ (1-p_{11}) (1-p_{00}) \} / \{ (1-p_{10})(1-p_{01}) \} \leq 1.
$
Replacing the probabilities by their definitions, we have
\begin{eqnarray*}
\frac{   \pr(A=1\mid U>u, Z=z_1)  \pr(A=1\mid U\leq u, Z=z_0)   }
{    \pr(A=1\mid U>u, Z=z_0)    \pr(A=1\mid U\leq u, Z=z_1)    }  &=& 1, \\
\frac{    \pr(A=0\mid U>u, Z=z_1)   \pr(A=0\mid U\leq u, Z=z_0)    }
{    \pr(A=0\mid U>u, Z=z_0)    \pr(A=0\mid U\leq u, Z=z_1)    } &\leq& 1 . 
\end{eqnarray*}
Following the same logic of the proof of Lemma \ref{lemma::general-interaction}, we can prove that $Z\ind U\mid A=1$, and $Z$ has non-positive distributional association on $U$, given $A=0$.
\end{proof}

Define $f=\pr(A=1)$ to be the proportion of the population under treatment. 
The average causal effect for the whole population can be written as a convex combination of the average causal effects for the treated and control populations: 
$$
\ACE^\true = \E\{  Y(1) \} - \E\{  Y(0) \} =   f \ACE_1^\true + (1-f) \ACE_0^\true.
$$
Analogously, with a scalar instrumental variable, the adjusted estimator for the whole population can be written as
$$
\ACE^\adj = \int  \mu_1(z) F(\d z)  -  \int   \mu_0(z) F(\d z)  
= f \ACE_1^\adj  + (1-f) \ACE_0 ^\adj ,
$$
and with a general instrumental variable, 
$$
\ACE^\adj = \int  \nu_1(\pi) F(\d \pi)  -  \int   \nu_0(\pi) F(\d \pi)  
= f \ACE_1^\adj  + (1-f) \ACE_0^\adj .
$$

\begin{lemma}
\label{lemma::adjust-formula}
With a scalar instrumental variable $Z$, the differences between the adjusted and unadjusted estimators are 
\begin{eqnarray*}
\ACE_1^\adj - \FACE &=&  -  \frac{  \cov\{   \Pi(Z) , \mu_0(Z)  \}    }{   f(1-f)  } ,\\
\ACE_0^\adj - \FACE &=&  -  \frac{  \cov\{   \Pi(Z) , \mu_1(Z)  \}    }{   f(1-f)  } ,\\
\ACE^\adj -  \FACE &=& -  \frac{  \cov\{   \Pi(Z) , \mu_0(Z)  \}    }{  1-f } -  \frac{  \cov\{   \Pi(Z) , \mu_1(Z)  \}    }{   f  }  .
\end{eqnarray*}
With a general instrumental variable $Z$, the above formulas hold if we replace $\Pi(Z)$ by $\Pi$ and $\mu_a(Z) = \E(Y\mid A=a,Z)$ by $\nu_a(\Pi) = \E(Y\mid A=a, \Pi).$
\end{lemma}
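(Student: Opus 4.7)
The plan is to write each adjusted estimator as an integral against the distribution of $Z$ weighted by a Bayes-rule factor, then turn the difference $\ACE_a^\adj - \FACE$ into a covariance via the identity $E\{\Pi(Z)\} = f$. First I would use Bayes' rule with $\Pi(z)=\pr(A=1\mid Z=z)$ to write
\[
F(\d z \mid A=1) = \frac{\Pi(z)}{f}\,F(\d z), \qquad F(\d z \mid A=0) = \frac{1-\Pi(z)}{1-f}\,F(\d z),
\]
and note that $\E(Y\mid A=0)=\int \mu_0(z)\,F(\d z\mid A=0)$, so that
\[
\ACE_1^\adj - \FACE = \int \mu_0(z)\left\{\frac{1-\Pi(z)}{1-f} - \frac{\Pi(z)}{f}\right\} F(\d z) = -\frac{1}{f(1-f)}\int \mu_0(z)\{\Pi(z)-f\}\,F(\d z).
\]

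Next I would recognize that $\E\{\Pi(Z)\} = \pr(A=1) = f$, so the integral equals $\E\{\mu_0(Z)\Pi(Z)\} - \E\{\mu_0(Z)\}\E\{\Pi(Z)\} = \cov\{\Pi(Z),\mu_0(Z)\}$, giving the first identity. The second identity follows by the symmetric calculation, using $\E(Y\mid A=1)=\int \mu_1(z)\,F(\d z\mid A=1)$ and swapping the roles of $A=1$ and $A=0$. For the third identity, I would invoke the convex-combination identity $\ACE^\adj = f\,\ACE_1^\adj + (1-f)\,\ACE_0^\adj$ stated immediately before the lemma; since $\FACE = f\,\FACE + (1-f)\,\FACE$ trivially, subtracting and using the first two identities yields
\[
\ACE^\adj - \FACE = -\frac{f\,\cov\{\Pi,\mu_0\}}{f(1-f)} - \frac{(1-f)\,\cov\{\Pi,\mu_1\}}{f(1-f)} = -\frac{\cov\{\Pi,\mu_0\}}{1-f} - \frac{\cov\{\Pi,\mu_1\}}{f}.
\]

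For the general (vector) instrument, I would note that by the propensity-score property $Z\ind A\mid \Pi(Z)$ of \citet{Rosenbaum::1983}, we have $\E(Y\mid A=a,Z) = \E(Y\mid A=a,\Pi) = \nu_a(\Pi)$ almost surely, so the adjusted estimator $\int\nu_a(\pi)\,F(\d\pi\mid A=a')$ equals $\int\mu_a(z)\,F(\d z\mid A=a')$; the same three-step derivation then goes through verbatim with $\Pi$ in place of the scalar $Z$ and $\nu_a$ in place of $\mu_a$. There is no real obstacle here — the identities are essentially bookkeeping built on Bayes' rule and the fact that $\Pi(Z)$ has mean $f$ — so the only point requiring any care is confirming that the two representations of the adjusted estimator (via $Z$ versus via $\Pi$) coincide under the propensity-score sufficiency.
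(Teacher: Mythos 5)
Your proposal is correct and follows essentially the same route as the paper: rewrite $\E(Y\mid A=0)$ as $\int\mu_0(z)\,F(\d z\mid A=0)$, apply Bayes' rule to express both conditional distributions of $Z$ through $\Pi(z)$ and $f=\E\{\Pi(Z)\}$, read off the covariance, and obtain the whole-population formula from the convex combination $\ACE^\adj=f\ACE_1^\adj+(1-f)\ACE_0^\adj$. The general-instrument case is likewise handled by the paper as the identical calculation with $\Pi$ in place of $Z$ (your aside about the two representations coinciding is not even needed, since the lemma is stated directly in terms of $\nu_a(\Pi)$ and $F(\d\pi\mid A=a)$).
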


\begin{proof}
[of Lemma \ref{lemma::adjust-formula}]
The difference $\ACE_1^\adj - \FACE $ is equal to
\begin{eqnarray*}
&&\ACE_1^\adj - \FACE  \\
&=& \E(Y\mid A=0) -  \int   \mu_0(z) F(\d z\mid A=1)  \\
&=&  \int   \mu_0(z) F(\d z\mid A=0) -  \int   \mu_0(z) F(\d z\mid A=1) \\
&=& \frac{    \int   \mu_0(z) \{ 1- \Pi(z) \}  F(\d z)    }{1-f}
- \frac{   \int    \mu_0(z)  \Pi(z)  F(\d z)    }{f} \\
&=& \frac{1}{f(1-f)}  \Big [  
\E\{     \mu_0(Z) (1- \Pi(Z) )   \}  \E\{   \Pi(Z)   \} 
-  \E\{   \mu_0(Z)  \Pi(Z) \} \E\{ 1 - \Pi(Z)  \} 
\Big ] \\
&=& \frac{1}{f(1-f)} \Big [     \E\{  \mu_0(Z)  \} \E\{  \Pi(Z)   \}  -   \E\{   \mu_0(Z)  \Pi(Z) \}  \Big ] \\
&=& -\frac{  \cov\{   \Pi(Z) , \mu_0(Z)  \}    }{   f(1-f)  }.
\end{eqnarray*}

Similarly, the difference $\ACE_0^\adj - \FACE$ is equal to
\begin{eqnarray*}
\ACE_0^\adj - \FACE 
&=& \int  \mu_1(z) F(\d z \mid A=0)  - \int  \mu_1(z) F(\d z \mid A=1) \\
&=&  -\frac{  \cov\{   \Pi(Z) , \mu_1(Z)  \}    }{   f(1-f)  }.
\end{eqnarray*}

Therefore, the difference $\ACE^\adj -  \FACE$ is equal to
\begin{eqnarray*}
\ACE^\adj -  \FACE 
&=&f (\ACE_1^\adj - \FACE ) + (1-f) (\ACE_0^\adj - \FACE) \\
&=& -  \frac{  \cov\{   \Pi(Z) , \mu_0(Z)  \}    }{  1-f } -  \frac{  \cov\{   \Pi(Z) , \mu_1(Z)  \}    }{   f  }  .
\end{eqnarray*}

Analogously, we can prove the results for general instrumental variables.
\end{proof}

%
%
%

\section*{Appendix 2. \quad Proofs of Theorems and Corollaries in the Main Text}


\begin{proof}
[of Theorem \ref{thm::Zbias-scalar-case-general}]
Because
$
\Pi(z) =  \pr(A=1\mid Z=z)  
$
and
$
\pr(A=1\mid U=u) 
$
are non-decreasing in $z$ and $u$, and $  \E(Y\mid A=a, U=u)$ is non-decreasing in $u$ for both $a=0$ and $1$, 
the unadjusted estimator, $\FACE$, is larger than or equal to $\ACE^\true, \ACE_1^\true$ and $\ACE_0^\true$, according to Lemmas \ref{lemma::vw2008}--\ref{lemma::chiba2009-2}.

Because $\Pi(Z)$ is non-decreasing and $\mu_a(Z)$ is non-increasing in $Z$ for both $a=0$ and $1$, their covariance is non-positive according to Lemma \ref{lemma::epw1967}, i.e.,
$
\cov\{ \Pi(Z),  \mu_a(Z)  \} \leq 0.
$

Because the differences between all the adjusted estimators, $\ACE_1^\adj$, $\ACE_0^\adj$ and $\ACE^\adj$, and the unadjusted estimator, $\FACE$, are negative constants multiplied by $\cov\{ \Pi(Z),  \mu_a(Z)  \}$, according to Lemma \ref{lemma::adjust-formula} all of 
$
\ACE_1^\adj,
$
$
\ACE_0^\adj,
$ 
and
$ 
\ACE^\adj 
$
are larger or equal to $\FACE.$
\end{proof}

\begin{proof}
[of Theorem \ref{thm::Zbias-scalar-case}]
The independence of $Z$ and $U$ implies that
\begin{eqnarray*}
\pr(A=1\mid Z=z)  &=& \int \pr(A=1\mid Z=z, U=u) F( \d u) = \beta(z) + \E\{ \gamma(U) \}  ,\\
\pr(A=1\mid U=u) &=& \int \pr(A=1\mid Z=z, U=u) F(\d z) = \E \{ \beta(Z)  \} + \gamma(u)
\end{eqnarray*}
are non-decreasing in $z$ and $u.$ Therefore, according to Theorem \ref{thm::Zbias-scalar-case-general} we need only to verify that $\E(Y\mid A=a, Z= z)$ in non-increasing in $z$ for both $a=0$ and $1.$

Because $Z\ind U$ and $\pr(A=1\mid Z=z, U=u) =  \beta(z) +  \gamma(u)$ with non-decreasing $\beta(z)$ and $\gamma(u)$, we can apply Lemma \ref{lemma::general-interaction}, and conclude that 
$
 \partial F(u\mid A=a, Z=z) / \partial z   \geq 0.
$

Write the essential infimum and supremum of $U$ given $(A=a, Z=z)$ as $\underline{u}(a,z)$ and $\overline{u}(a)$, with the later depending only on $a$ according to Condition (c) of Theorem \ref{thm::Zbias-scalar-case}. Because $Y\ind Z\mid (A,U)$, integration or summation by parts gives
\begin{eqnarray*}
&&\E(Y\mid A=a, Z=z) \\
&=&\int \E(Y\mid A=a, Z=z, U=u) F(\d u \mid A=a, Z=z) \\
&=& \int m_a(u) F(\d u \mid A=a, Z=z) \\
&=& m_a(u) F(u  \mid A=a,Z=z) |_{u=\underline{u}(a,z)}^{u=\overline{u}(a)}
- \int   \left\{   \frac{\partial  m_a(u) }{ \partial u} \right\}  F(u\mid A=a, Z=z) \d u\\
&=& m_a\{ \overline{u}(a)   \} -  \int  \left\{   \frac{\partial  m_a(u) }{ \partial u} \right\} F(u\mid A=a, Z=z) \d u.
\end{eqnarray*}
Therefore, its derivative with respect to $z$,
\begin{eqnarray*}
\frac{\partial \E(Y\mid A=a, z)    }{\partial z} 
&=&  -
\frac{\partial }{\partial z} \int  \left\{   \frac{\partial  m_a(u) }{ \partial u} \right\} F(u\mid A=a, Z=z) \d u  \\
&=& 
-\int  \left\{   \frac{\partial  m_a(u) }{ \partial u} \right\}   \left\{    \frac{\partial F(u\mid A=a,Z=z) }{\partial z}    \right\}  \d u ,
\end{eqnarray*}
is smaller than or equal to zero, because $\partial m_a(u) /\partial u \geq 0$ for both $a=0$ and $1$ and for all $u$.
\end{proof}

\begin{proof}
[of Corollary \ref{thm::Zbias-binary-case}]
According to Theorem \ref{thm::Zbias-scalar-case-general} we need only to verify that $\mu_a(z) =  \E(Y\mid A=a, Z=z)$ is non-increasing in $z$ for both $a=0$ and $1.$
Following Lemma \ref{lemma::interactions}, for binary and independent $Z$ and $U,$ monotonicity and no additive interaction imply \eqref{eq::weaker-conditions-supp},
which, according to Bayes' Theorem, is equivalent to
\begin{eqnarray}
\frac{  \pr(A=1\mid Z=1, U=1)    \pr(A=1\mid Z=0, U=0)  }
{   \pr(A=1\mid Z=1, U=0)    \pr(A=1\mid Z=0, U=1)   } = \OR_{ZU|A=1} \leq 1, \label{eq::or1} \\
\frac{  \pr(A=0\mid Z=1, U=1)    \pr(A=0\mid Z=0, U=0)  }
{   \pr(A=0\mid Z=1, U=0)    \pr(A=0\mid Z=0, U=1)   } = \OR_{ZU|A=0}  \leq 1  . \label{eq::or0}
\end{eqnarray}
The above inequalities \eqref{eq::or1} and \eqref{eq::or0} state that $Z$ and $U$ have negative association given each level of $A$, and therefore $\pr(U=1\mid A=a, Z=z)$ is non-increasing in $z$ for both $a=1$ and $0.$

Because $m_a(1) \geq  m_a(0)$ and 
\begin{eqnarray*}
\mu_a(z) &=& \E(Y\mid A=a, Z=z) \\
&=& \sum_{u=0,1}  \E(Y\mid A=a, Z=z, U=u) \pr(U=u\mid A=a,  Z=z) \\
&=& m_a(1) \pr(U=1\mid A=a, Z=z) + m_a(0)  \{ 1 - \pr(U=1\mid A=a, Z=z) \} \\
&=& \{ m_a(1) - m_a(0) \}  \pr(U=1\mid A=a, Z=z)
+ m_a(0),
\end{eqnarray*}
we know that
$\mu_a(z)$ is non-decreasing in $\pr(U=1\mid A=a, Z=z)$. Therefore, $\mu_a(z)$ is non-increasing in $z$ for both $a=1$ and $0.$  
\end{proof}

\begin{proof}
[of Theorem \ref{thm::Zbias-scalar-case-multi}]
Because of the independence of $Z$ and $U$, we have $\pr(A=1\mid Z=z) = \beta(z)E\{  \gamma(U) \}$ and $\pr(A=1\mid U=u) = E\{ \beta(Z)  \}  \gamma(u)$ are non-decreasing in $z$ and $u.$ According to Lemma \ref{lemma::inter-multi-general}, the multiplicative model of $A$ also implies that for both $a=1$ and $0$ and for all $z$ and $u$,
$
 \partial F(u\mid A=a, Z=z) / \partial z   \geq 0.
$
Following exactly the same steps of the proof of Theorem \ref{thm::Zbias-scalar-case}, we can prove Theorem \ref{thm::Zbias-scalar-case-multi}.
\end{proof}

\begin{proof}
[of Corollary \ref{thm::Zbias-binary-case-multi}]
For binary and independent $Z$ and $U$, monotonicity, no multiplicative interaction, and Lemma \ref{lemma::inter-multi} imply
\begin{eqnarray}
\label{eq::multi-binary}
\frac{p_{11}p_{00}}{p_{10}p_{01}} = 1\leq 1,\quad
\frac{(1-p_{11}) (1-p_{00}) }{ (1-p_{10}) (1-p_{01})  } \leq 1.
\end{eqnarray}
With the above results in \eqref{eq::multi-binary}, the rest of the proof is the same as the proof of Corollary \ref{thm::Zbias-binary-case}.
\end{proof}

\begin{proof}
[of Theorem \ref{thm::general-iv-conf-weak}]
First, we consider the treatment effect on the population under treatment. Taking $U=Y(0)$ in Lemma \ref{lemma::chiba2009-1}, we have $\FACE\geq \ACE_1^\true$, because $A\ind Y(0)\mid Y(0)$, $\pr\{ A=1\mid Y(0) \}$ is non-decreasing in $Y(0)$, and $\E\{ Y\mid A=0, Y(0)\}  = Y(0)$ is non-decreasing in $Y(0)$. 
The condition $\cov\{  \Pi, \E( Y\mid A=0, \Pi )   \}  \leq 0$ implies that $\ACE_1^\adj \geq \FACE$ according to Lemma \ref{lemma::adjust-formula}. Therefore, $ \ACE_1^\adj \geq \FACE  \geq   \ACE_1^\true.$

Second, we take $U=Y(1)$ in Lemma \ref{lemma::chiba2009-2}, and by a similar argument as above we have $ \ACE_0^\adj \geq \FACE  \geq  \ACE_0^\true.$

The conclusion holds because $\ACE^\true =   f \ACE_1^\true + (1-f) \ACE_0^\true$ and $\ACE^\adj = f \ACE_1^\adj + (1-f) \ACE_0^\adj.$
\end{proof}

\begin{proof}
[of Theorem \ref{thm::general-iv-conf}]
Under the additive model of $A$ given $\Pi$ and $U = \{ Y(1), Y(0)  \}$, we have the following results.
First, $\pr(A=1\mid \Pi) = \Pi$ is increasing in $\Pi.$
Second, $\Pi \ind \{ Y(1), Y(0)\} $ implies
\begin{eqnarray*}
\pr\{  A=1\mid \Pi, Y(1) = y_1\}  &=& \int \pr(A=1\mid \Pi, U) F(\d y_0 \mid y_1)  \\
&=& \int\{  \Pi + \delta(y_1)  + \eta(y_0)  \}  F(\d y_0 \mid y_1) \\
&=& \Pi + \delta(y_1) + \int  \eta(y_0)   F(\d y_0 \mid y_1) \equiv \Pi  + \widetilde{\delta}(y_1).
\end{eqnarray*}
Denote the infimum and supremum of $Y(0)$ given $Y(1)=y_1$ by $\underline{y}_0(y_1)$ and $\overline{y}_0$, with the later not depending on $y_1$ according to Condition (c) of Theorem \ref{thm::general-iv-conf}. Applying integration or summation by parts, we have
\begin{eqnarray*}
\widetilde{\delta}(y_1) &=& \delta(y_1) + \eta(y_0) F(y_0\mid y_1)|_{y_0 = \underline{y}_0(y_1)}^{y_0 = \overline{y}_0}
- \int  \left\{  \frac{ \d \eta(y_0) }{ \d y_0} \right\} F( y_0 \mid y_1) \d y_0 \\
&=&\delta(y_1) + \eta(\overline{y}_0) -  \int  \left\{  \frac{ \d \eta(y_0) }{ \d y_0} \right\} F( y_0 \mid y_1) \d y_0 .
\end{eqnarray*}
The function $ \widetilde{\delta}(y_1)$ is non-decreasing in $y_1$, because
\begin{eqnarray*}
\frac{ \d~\widetilde{\delta}(y_1) }{ \d  y_1} 
= \frac{\d \delta(y_1) }{\d y_1}
-  \int  \left\{  \frac{ \d \eta(y_0) }{ \d y_0} \right\} \left\{  \frac{ \partial  F( y_0 \mid y_1)  }{\partial y_1 } \right\}  \d y_0
\geq  0.
\end{eqnarray*}

Third, following the same reasoning as the second argument, we have
$
\pr\{  A=1\mid \Pi, Y(1) = y_0\} =  \Pi +    \widetilde{\eta}(y_0),
$
with $\widetilde{\eta}(y_0)$ being a non-decreasing function of $y_0.$
Fourth, $\Pi \ind Y(1)$ implies
$
\pr\{ A=1\mid Y(1) = y_1 \} 
= f +  \widetilde{\delta}(y_1),
$
which is non-decreasing in $y_1.$
Fifth, $\Pi \ind Y(0)$ implies
$
\pr\{ A=1\mid Y(0) = y_0 \}   = f +  \widetilde{\eta}(y_0),
$
which is non-decreasing in $y_0.$

According the fourth and fifth arguments above, Condition (a) in Theorem \ref{thm::general-iv-conf-weak} holds. Therefore, we need only to verify Condition (b) in Theorem \ref{thm::general-iv-conf-weak} to complete the proof. 

We have shown that $\pr\{ A=1\mid \Pi, Y(1) \} = \Pi + \widetilde{\delta} \{  Y(1) \}$, which is additive and non-decreasing in $\Pi$ and $Y(1)$. According to Lemma \ref{lemma::general-interaction}, we know that 
\begin{eqnarray}
\frac{  \partial \pr\{ Y(1) \leq  y_1 \mid A=1, \Pi =  \pi \}  }{ \partial \pi} \geq  0
\label{eq::distributional-1}
\end{eqnarray}
for all $y_1$ and $\pi.$
We have also shown that $\pr\{ A=1\mid \Pi, Y(0) \} = \Pi + \widetilde{\eta} \{  Y(0) \}$, which is additive and non-decreasing in $\Pi$ and $Y(0)$. Again according to Lemma \ref{lemma::general-interaction}, we know that 
\begin{eqnarray}
\frac{  \partial \pr\{ Y(0) \leq  y_0 \mid A=0, \Pi =  \pi \}  }{ \partial \pi} \geq  0
\label{eq::distributional-2}
\end{eqnarray}
for all $y_0$ and $\pi.$
According to \citet{xie2008some}, the above negative distributional associations in \eqref{eq::distributional-1} and \eqref{eq::distributional-2} imply the negative associations in expectation between $Y(0)$ and $\Pi$ given $A$, as required by condition (b) of Theorem \ref{thm::general-iv-conf-weak}.
\end{proof}

\begin{proof}
[of Corollary \ref{thm::general-iv-conf-binary}]
As shown in the proof of Theorem \ref{thm::general-iv-conf}, the conclusion follows immediately from the five ingredients. We will show that they hold even if there is non-negative interaction between binary $Y(1)$ and $Y(0)$. The following proof is in parallel with the proof of Theorem \ref{thm::general-iv-conf}.

First, $\pr(A=1\mid \Pi) = \Pi$ is increasing in $\Pi.$
Second, 
\begin{eqnarray} 
&&\pr\{  A=1\mid \Pi, Y(1) = y_1 \}   \nonumber  \\
&=& E\left[  \pr\{  A=1\mid \Pi, Y(1) = y_1, Y(0) \}  \mid  \Pi, Y(1) = y_1 \right]   \nonumber   \\
&=&
\E \left\{    \alpha +   \Pi + \delta  y_1 + \eta Y(0) + \theta y_1 Y(0) \mid \Pi, Y(1) = y_1    \right\}  \nonumber  \\
&=& \alpha + \Pi + \delta  y_1 + \eta \pr\{  Y(0) = 1\mid Y(1) = y_1\}  
+ \theta y_1 \pr\{  Y(0)=1\mid Y(1) = y_1\}   \label{eq::given-y1} \\
&\equiv & \Pi + \widetilde{\delta}   [  y_1  - \E\{ Y(1) \}  ]  . \label{eq::given-y1-linear}
\end{eqnarray}
The last equation in \eqref{eq::given-y1-linear} follows from the fact that $Y(1)$ is binary and the functional form must be linear in $y_1$, where the coefficient is
\begin{eqnarray}
 \widetilde{\delta}   &=& \pr\{  A=1\mid \Pi, Y(1) = 1 \} - \pr\{  A=1\mid \Pi, Y(1) = 0 \}  \nonumber  \\
 &=& \delta + \eta [  \pr\{  Y(0) = 1\mid Y(1) = 1\}  -  \pr\{  Y(0) = 1\mid Y(1) = 0\} ] 
 +\theta   \pr\{  Y(0)=1\mid Y(1) = 1\}  \nonumber  \\
 && \label{eq::coef-given-y1} \\
 &\geq & \eta [  \pr\{  Y(0) = 1\mid Y(1) = 1\}  -  \pr\{  Y(0) = 1\mid Y(1) = 0\} ], \label{eq::coef-given-y1-bound}
\end{eqnarray}  
where \eqref{eq::coef-given-y1} follows from \eqref{eq::given-y1}, and \eqref{eq::coef-given-y1-bound} follows from $\delta \geq 0$ and $\theta \geq 0.$
Because $\OR_Y\geq 1$, the potential outcomes have non-negative association, implying that their risk difference $\RD_Y =\pr\{  Y(0) = 1\mid Y(1) = 1\}  -  \pr\{  Y(0) = 1\mid Y(1) = 0\}  \geq 0$.
Therefore, $\widetilde{\delta}  \geq 0$, and $\pr\{  A=1\mid \Pi, Y(1)   \}$ is additive and non-decreasing in $\Pi$ and $Y(1)$.

Third, similar to the second argument, we have
$
\pr\{  A=1\mid \Pi, Y(0) = y_0 \} = \Pi + \widetilde{\eta}[  y_0  - \E\{ Y(0) \}  ] 
$
with 
$
 \widetilde{\eta} 
  \geq 0.
$
Therefore, $\pr\{  A=1\mid \Pi, Y(0)   \}$ is additive and non-decreasing in $\Pi$ and $Y(0)$.
Fourth, $\Pi\ind Y(1)$ implies that
$
\pr\{ A=1\mid Y(1) \} = f +  \widetilde{\delta}   Y(1)
$
is increasing in $Y(1).$
Fifth, $\Pi\ind Y(0)$ implies that
$
\pr\{ A=1\mid Y(0) \} = f +  \widetilde{\eta}   Y(0)
$
is increasing in $Y(0).$

With these five ingredients, the rest of the proof is exactly the same as the proof of Theorem \ref{thm::general-iv-conf}.
\end{proof}

\begin{proof}
[of Theorem \ref{thm::general-iv-conf-multi}]
First, $\pr(A=1\mid \Pi) = \Pi$ is non-decreasing in $\Pi$. Second,
$$
\pr\{  A=1\mid \Pi, Y(1)=y_1\} = \Pi \delta(y_1) \int \delta(y_0) F(\d y_0\mid y_1)\equiv \Pi \widetilde{\delta}(y_1)
$$
is multiplicative and non-decreasing in $\Pi$ and $y_1$, following the same argument as the proof of Theorem \ref{thm::general-iv-conf}. Third, $\pr\{  A=1\mid \Pi, Y(0)=y_0\}  = \Pi \widetilde{\eta}(y_0)$ is multiplicative and non-decreasing in $\Pi$ and $y_0$. Fourth, $\pr\{ A=1\mid Y(1)=y_1\} = f \widetilde{\delta}(y_1)$ is non-decreasing in $y_1$. Fifth, $\pr\{ A=1\mid Y(0)=y_0\} = f \widetilde{\eta}(y_0)$ is non-decreasing in $y_0$. 

The multiplicative models and Lemma \ref{lemma::inter-multi-general} imply that for all $\pi, y_1$ and $y_0$,
\begin{eqnarray}
\frac{  \partial \pr\{ Y(1) \leq  y_1 \mid A=1, \Pi =  \pi \}  }{ \partial \pi} =  0\leq 0,\quad
\frac{  \partial \pr\{ Y(0) \leq  y_0 \mid A=0, \Pi =  \pi \}  }{ \partial \pi} \geq  0.
\label{eq::distributional-3}
\end{eqnarray}
The rest part is the same as the proof of Theorem \ref{thm::general-iv-conf}.
\end{proof}

\begin{proof}
[of Corollary \ref{thm::general-iv-conf-binary-multi}]
First, $\pr(A=1\mid \Pi) = \Pi$ is non-decreasing in $\Pi$. Second,
$$
\pr\{  A=1\mid \Pi, Y(1)=y_1\} = \alpha  \Pi \delta^{y_1} \E\{  \eta^{Y(0)} \theta^{y_1Y(0)} \mid Y(1) = y_1 \} \equiv \alpha \Pi \widetilde{\delta}^{Y(1)},
$$
where the functional form must be multiplicative because of binary $Y(0)$, and the parameter $\widetilde{\delta}$ is
\begin{eqnarray*}
\widetilde{\delta} &=&  \frac{  \pr\{  A=1\mid \Pi, Y(1)=1\}  }{   \pr\{  A=1\mid \Pi, Y(1)= 0 \} }  \\
&=& \delta \times    \frac{  \E\{  \eta^{Y(0)} \theta^{Y(0)} \mid Y(1) = 1 \}  }
{     \E\{  \eta^{Y(0)}  \mid Y(1) = 0 \}    } \\
&=& \delta \times   \frac{     \eta \theta \pr\{  Y(0)=1\mid Y(1) = 1  \}  +  \pr\{  Y(0)=0 \mid Y(1) = 1  \}    }
{  \eta  \pr\{  Y(0)=1\mid Y(1) = 0  \}  +  \pr\{  Y(0)=0 \mid Y(1) = 0  \}   } \\
&=& \delta \times    \frac{    ( \eta \theta - 1)  \pr\{  Y(0)=1\mid Y(1) = 1  \}  +  1   }
{  (\eta-1)  \pr\{  Y(0)=1\mid Y(1) = 0  \}  +  1 } .
\end{eqnarray*}
Because $\OR_Y \geq 1$, we have $ \pr\{  Y(0)=1\mid Y(1) = 1  \} \geq \pr\{  Y(0)=1\mid Y(1) = 0  \} $, which implies that $\widetilde{\delta}  \geq 1.$ Therefore, $\pr\{  A=1\mid \Pi, Y(1)\} $ is multiplicative and non-decreasing in $\Pi$ and $Y(1)$. Third, we can similarly show that $\pr\{  A=1\mid \Pi, Y(0)\} $ is multiplicative and non-decreasing in $\Pi$ and $Y(0).$ Fourth, $\pr\{ A=1\mid Y(1)=y_1\} = \alpha f \widetilde{\delta}^{y_1}$ is non-decreasing in $y_1$. Fifth, $\pr\{ A=1\mid Y(0)=y_0\} = \alpha f \widetilde{\eta}^{y_0}$ is non-decreasing in $y_0$.  

The rest part is the same as the proof of Theorem \ref{thm::general-iv-conf-multi}.
\end{proof}

\begin{proof}
[of Theorem \ref{thm::Zbias-scalar-case-general-direct}.]
In Figure \ref{fg::DAG-direct}, $Z$ and $U$ are two independent confounders for the relationship between $A$ and $Y$. Because $\pr(A=1\mid Z=z, U=u)$ and $\E(Y\mid A=a, Z=z, U=u)$ are non-decreasing in $z$ and $u$ for both $a=0$ and $1$, Lemmas \ref{lemma::vw2008}--\ref{lemma::chiba2009-2} imply that the unadjusted estimator, $\FACE$, is larger than or equal to $\ACE^\true, \ACE_1^\true$ and $\ACE_0^\true$.

The independence between $Z$ and $U$ implies $\pr(A=1\mid Z=z) = \int \pr(A=1\mid Z=z, U=u) F(\d u)$, and the monotonicity of $\pr(A=1\mid Z=z, U=u)$ in $z$ implies that $\pr(A=1\mid Z=z)$ is non-decreasing in $z$. The rest of the proof is identical to the proof of Theorem \ref{thm::Zbias-scalar-case-general}.
\end{proof}

\section*{Appendix 3.\quad  Extensions to Other Causal Measures}
\label{sec::extension}

\subsection*{Appendix 3$\cdot$1.\quad  Distributional Causal Effects}

Sometimes we are also interested in estimating the distributional causal effects \citep{ju2010criteria} for the treatment, control and whole populations:
\begin{eqnarray*}
\DCE_1^\true(y) &=& \pr\{  Y(1)>y\mid A=1 \} - \pr\{Y(0) >y\mid A=1 \} ,\\
\DCE_0^\true(y) &=& \pr\{  Y(1)>y\mid A=0 \} - \pr\{Y(0) >y\mid A=0 \} ,\\
\DCE^\true(y)     &=& \pr\{  Y(1)>y \} - \pr\{Y(0) >y \} .
\end{eqnarray*} 

The unadjusted estimator is
$$
\FDCE(y) = \pr(Y>y\mid A=1) - \pr(Y>y\mid A=0). 
$$
The adjusted estimators for the treatment, control and whole populations are
\begin{eqnarray*}
\DCE_1^\adj(y) &=& \pr(Y>y\mid A=1) - \int  \pr(Y>y\mid A=0, z) F(\d z\mid A=1),\\
\DCE_0^\adj(y) &=& \int \pr(Y>y\mid A=1, z) F(\d z\mid A=0) - \pr(Y>y\mid A=0),\\
\DCE^\adj(y)     &=&  \int \pr(Y>y\mid A=1, z) F(\d z) - \int  \pr(Y>y\mid A=0, z) F(\d z) .
\end{eqnarray*}

If the outcome is binary, then the distributional causal effects at $y<1$ are the average causal effects, and zero at $y\geq 1$. All results about distributional causal effects reduce to average causal effects for binary outcome. For a general outcome, the distributional causal effects are the average causal effects on the dichotomized outcome $I_y = I(Y>y).$ Therefore, if we replace the outcome $Y$ by $I_y$ in Theorems \ref{thm::Zbias-scalar-case-general}--\ref{thm::Zbias-scalar-case-multi}, the results about Z-Bias hold for distributional effects. For instance, the condition that $\pr(Y>y\mid A=a, U=u)$ is non-decreasing in $u$ for all $a$ is the same as requiring a non-negative sign on the arrow $U\rightarrow Y$, according to the theory of signed directed acyclic graphs \citep{vanderweele2010signed}. The following theorem states the results analogous to Theorems \ref{thm::general-iv-conf-weak}--\ref{thm::general-iv-conf-multi}.

\begin{corollary}
\label{thm::DCE-Zbias}
In the causal diagram of Figure \ref{fg::DAG2}, if for all $y$ and for both $a=1$ and $0$, 
\begin{enumerate}
[(a)]
\item
$\pr\{  Y(a) > y\mid A=1  \} \geq \pr\{  Y(a) >y\mid A=0 \}$; 

\item
$\cov\{ \Pi, \pr(Y>y\mid A=a, \Pi) \} \leq 0$;
\end{enumerate}
then  
\begin{eqnarray}
\label{eq::result-DCE}
\begin{pmatrix}
\DCE_1^\adj (y) \\
\DCE_0^\adj (y) \\
\DCE^\adj (y)
\end{pmatrix}
\geq 
\begin{pmatrix}
\FDCE(y)  \\
\FDCE(y)  \\
\FDCE(y) 
\end{pmatrix}
\geq 
\begin{pmatrix}
\DCE_1^\true(y) \\
\DCE_0^\true(y) \\
\DCE^\true (y)
\end{pmatrix}.
\end{eqnarray}
Under the conditions of Theorems \ref{thm::general-iv-conf} and \ref{thm::general-iv-conf-multi}, \eqref{eq::result-DCE} holds. 
\end{corollary}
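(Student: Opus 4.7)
The plan is to reduce the distributional claim at each fixed level $y$ to the already-established average causal effect result of Theorem \ref{thm::general-iv-conf-weak}, applied to the binary dichotomized outcome
$$
I_y = I(Y > y), \qquad I_y(a) = I\{Y(a) > y\}.
$$
First I would check that the three $\DCE$-type estimands at $y$ coincide exactly with the three $\ACE$-type estimands for the outcome $I_y$: e.g.\ $\DCE_1^\true(y) = \E\{I_y(1)\mid A=1\} - \E\{I_y(0)\mid A=1\}$, $\FDCE(y) = \E(I_y\mid A=1)-\E(I_y\mid A=0)$, and the adjusted versions follow because $\pr(Y>y\mid A=a,\Pi) = \E(I_y \mid A=a,\Pi)$. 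Thus the target inequality \eqref{eq::result-DCE} is precisely the conclusion of Theorem \ref{thm::general-iv-conf-weak} applied to the outcome $I_y$.

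Second, I would translate hypotheses (a)--(b) of the corollary into hypotheses (a)--(b) of Theorem \ref{thm::general-iv-conf-weak} for this binary outcome. Condition (b) is immediate: $\nu_a^{I_y}(\pi) := \E(I_y\mid A=a,\Pi=\pi) = \pr(Y>y\mid A=a,\Pi=\pi)$, so $\cov\{\Pi,\nu_a^{I_y}(\Pi)\}\le 0$ is exactly hypothesis (b). For condition (a), since $I_y(a)$ is binary, the requirement that $\pr\{A=1\mid I_y(a)\}$ be non-decreasing reduces to $\pr\{A=1\mid I_y(a)=1\}\ge \pr\{A=1\mid I_y(a)=0\}$, which by Bayes' rule is equivalent to $\pr\{I_y(a)=1\mid A=1\}\ge \pr\{I_y(a)=1\mid A=0\}$, i.e.\ hypothesis (a). Invoking Theorem \ref{thm::general-iv-conf-weak} then yields \eqref{eq::result-DCE} at the arbitrary level $y$.

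Third, for the final sentence I would verify that the assumptions of Theorem \ref{thm::general-iv-conf} and of Theorem \ref{thm::general-iv-conf-multi} already imply (a) and (b) of the corollary. Their proofs establish two stronger statements: (i) $\pr\{A=1\mid Y(a)=y'\}$ is non-decreasing in $y'$ for $a=0,1$, and (ii) $\partial F\{Y(a)\le y\mid A=a,\Pi=\pi\}/\partial \pi \ge 0$ for all $y,\pi$. From (i), averaging over the conditioning sets $\{Y(a)>y\}$ and $\{Y(a)\le y\}$ gives $\pr\{A=1\mid Y(a)>y\}\ge \pr\{A=1\mid Y(a)\le y\}$, and one more Bayes inversion delivers hypothesis (a). From (ii), by the distributional-to-expectational association result of \citet{xie2008some} applied to the non-decreasing transformation $g(y')=I(y'>y)$, one obtains $\cov\{\Pi,\E[g(Y)\mid A=a,\Pi]\}\le 0$, which is exactly hypothesis (b).

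The only potentially delicate step is the very last passage from distributional to expectational association, but this is the identical invocation of \citet{xie2008some} already used in the proofs of Theorems \ref{thm::general-iv-conf} and \ref{thm::general-iv-conf-multi}, so it comes essentially for free. Everything else is a bookkeeping translation between the scales $Y$ and $I_y$, which is the whole point of the reduction.
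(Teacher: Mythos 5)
Your proposal is correct and follows essentially the same route as the paper's own proof: reduce to Theorem \ref{thm::general-iv-conf-weak} applied to the dichotomized outcome $I_y = I(Y>y)$, translate conditions (a)--(b) via Bayes' rule and the identity $\pr(Y>y\mid A=a,\Pi)=\E(I_y\mid A=a,\Pi)$, and for the last sentence reuse the monotonicity of $\pr\{A=1\mid Y(a)\}$ and the distributional associations \eqref{eq::distributional-1}--\eqref{eq::distributional-3} already established in the proofs of Theorems \ref{thm::general-iv-conf} and \ref{thm::general-iv-conf-multi}. Your write-up is slightly more explicit about the Bayes inversion and the invocation of \citet{xie2008some}, but the argument is the same.
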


\begin{proof}
[of Corollary \ref{thm::DCE-Zbias}]
Condition (a) of Corollary \ref{thm::DCE-Zbias} is equivalent to $\pr\{  A=1\mid I_y(a)=1 \} \geq \pr\{ A=1\mid I_y(a) = 0\}$, and Condition (b) of Corollary \ref{thm::DCE-Zbias} is equivalent to $\cov\{ \Pi, \E(I_y\mid A=a, \Pi) \} \leq 0$. Therefore, the conclusion follows from Theorem \ref{thm::general-iv-conf-weak}. 

According to the proofs of Theorems \ref{thm::general-iv-conf} and \ref{thm::general-iv-conf-multi}, we have 
\begin{eqnarray*}
&&\pr\{  A=1\mid I_y(a)=1 \} = \pr\{  A=1\mid Y(a) > y \}   \geq \pr\{ A=1\mid Y(a) = y \} \\
&\geq&  \pr\{  A=1\mid Y(a) \leq  y \}  =   \pr\{  A=1\mid I_y(a) = 0\},
\end{eqnarray*}
because of monotonicity of $ \pr\{ A=1\mid Y(a) \}$ in $Y(a)$. Therefore, Condition (a) of Theorem \ref{thm::DCE-Zbias} holds. Under the conditions of Theorems \ref{thm::general-iv-conf} and \ref{thm::general-iv-conf-multi}, we have also shown in \eqref{eq::distributional-1}--\eqref{eq::distributional-3} that for all $a, y$ and $\pi$,
$
   \partial \pr( Y \leq  y \mid A=a, \Pi =  \pi )    / \partial \pi  \geq  0,
$
which implies that $ \E(   I_y \mid A=a, \Pi =  \pi ) $ is non-increasing in $\pi$. Therefore, Condition (b) of Theorem \ref{thm::DCE-Zbias} holds. The proof is complete.
\end{proof}

\subsection*{Appendix 3$\cdot$2.\quad Ratio Measures}

In many applications with binary or positive outcomes, we are also interested in assessing causal effects on the ratio scale for the treatment, control and whole populations, defined as
\begin{eqnarray*}
\RRtrue_1 = \frac{\E\{ Y(1) \mid A=1 \}}{ \E\{ Y(0) \mid A=1 \}} ,\quad 
\RRtrue_0 = \frac{\E\{ Y(1)  \mid A=0 \}}{ \E\{ Y(0)  \mid A=0 \}} ,\quad 
\RRtrue = \frac{\E\{ Y(1) \}}{ \E\{ Y(0)  \}} .
\end{eqnarray*}

The unadjusted estimator on the ratio scale is
$$
\RRunadj = \frac{\E( Y \mid A=1 ) }{ \E(  Y \mid A=0 ) } . 
$$
The adjusted estimators on the ratio scale for the treatment, control and whole populations are
\begin{eqnarray*}
\RRadj_1 &=&  \frac{\E(  Y \mid A=1 ) }{  \int  \E\{ Y  \mid A=0, Z=z \} F(\d z\mid A=1) } ,\\
\RRadj_0 &=& \frac{  \int  \E\{ Y  \mid A=1, Z=z \} F(\d z\mid A=0)  }{ \E(  Y \mid A=0 ) } ,\\
  \RRadj &=& \frac{  \int  \E\{ Y  \mid A=1, Z=z \} F(\d z )  }{ \int  \E\{ Y  \mid A=0, Z=z \} F(\d z )}  .
\end{eqnarray*}
With a general instrumental variable $Z$, we can replace $Z$ by $\Pi$ in the definitions of the adjusted estimators.

\begin{corollary}
\label{thm::rr}
All the theorems and corollaries in \S\S \ref{sec::scalar} and \ref{sec::general} hold on the ratio scale, i.e., under their conditions,
\begin{eqnarray*}
\begin{pmatrix}
\RRadj_1 \\
\RRadj_0 \\
\RRadj
\end{pmatrix}
\geq 
\begin{pmatrix}
\RRunadj  \\
\RRunadj  \\
\RRunadj 
\end{pmatrix}
\geq 
\begin{pmatrix}
\RRtrue_1\\
\RRtrue_0\\
\RRtrue
\end{pmatrix}.
\end{eqnarray*}
\end{corollary}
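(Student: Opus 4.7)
The plan is to derive the ratio-scale inequalities as purely algebraic consequences of the additive chains $\ACE_*^\adj \geq \FACE \geq \ACE_*^\true$ already supplied by whichever theorem or corollary of \S\S \ref{sec::scalar}--\ref{sec::general} is in force, combined with consistency and the positivity of the conditional outcome means (which is automatic under the binary-or-positive hypothesis on $Y$). First I would treat the treated-population case: consistency gives $\E(Y\mid A=1) = \E\{Y(1)\mid A=1\}$, so the three estimands $\RRadj_1$, $\RRunadj$ and $\RRtrue_1$ share the common positive numerator $\E(Y\mid A=1)$. Consequently $\RRadj_1 \geq \RRunadj \geq \RRtrue_1$ is equivalent to the reverse inequality on denominators,
$$
\int \mu_0(z) F(\d z\mid A=1) \;\leq\; \E(Y\mid A=0) \;\leq\; \E\{Y(0)\mid A=1\},
$$
which is just $\ACE_1^\adj \geq \FACE \geq \ACE_1^\true$ after cancelling that common numerator. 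The control-population case is symmetric: $\RRadj_0$, $\RRunadj$ and $\RRtrue_0$ share the common positive denominator $\E(Y\mid A=0)$, and the corresponding chain on numerators is precisely $\ACE_0^\adj \geq \FACE \geq \ACE_0^\true$.

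For the whole population both sides of the ratio change across estimators, so the common-ratio trick does not apply and a convex-combination argument takes its place. Setting $f = \pr(A=1)$, I would write
\begin{align*}
\int \mu_1(z) F(\d z) &= f\, \E(Y\mid A=1) + (1-f) \int \mu_1(z) F(\d z\mid A=0), \\
\int \mu_0(z) F(\d z) &= f \int \mu_0(z) F(\d z\mid A=1) + (1-f)\, \E(Y\mid A=0),
\end{align*}
and invoke the pairwise inequalities $\int \mu_1(z) F(\d z\mid A=0) \geq \E(Y\mid A=1)$ and $\int \mu_0(z) F(\d z\mid A=1) \leq \E(Y\mid A=0)$ already exploited above; these yield $\int \mu_1(z) F(\d z) \geq \E(Y\mid A=1)$ and $\int \mu_0(z) F(\d z) \leq \E(Y\mid A=0)$, whence $\RRadj \geq \RRunadj$. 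An analogous decomposition of $\E\{Y(1)\}$ and $\E\{Y(0)\}$ as convex combinations over the strata $\{A=0,1\}$, combined with $\E\{Y(1)\mid A=0\} \leq \E(Y\mid A=1)$ and $\E\{Y(0)\mid A=1\} \geq \E(Y\mid A=0)$, gives $\RRunadj \geq \RRtrue$. For a general instrumental variable one simply replaces $Z$ by $\Pi$ and $\mu_a$ by $\nu_a$ throughout.

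The main step to get right is the whole-population case: since both the numerator and denominator of the ratio shift across estimators, one must combine the subpopulation pairwise inequalities with convex-combination identities, rather than cancelling a common factor. Beyond this, the only substantive check is positivity of every denominator in sight, which is automatic under the binary-or-positive hypothesis on $Y$. No fresh invocation of monotonicity, no-interaction, distributional-association, or signed-graph hypotheses is needed at the ratio-scale step itself; all the heavy probabilistic lifting is done upstream by the source theorem that supplies the additive chain.
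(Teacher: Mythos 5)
Your proof is correct, and for the subpopulation cases it coincides with what the paper intends: since $\RRadj_1$, $\RRunadj$ and $\RRtrue_1$ share the numerator $\E(Y\mid A=1)$ and the control-population analogues share the denominator $\E(Y\mid A=0)$, those two chains are literally the additive chains $\ACE_1^\adj \geq \FACE \geq \ACE_1^\true$ and $\ACE_0^\adj \geq \FACE \geq \ACE_0^\true$ read off the non-shared factor. Where you diverge is the whole-population step. The paper's proof is a one-liner: it cites the fact (from the eAppendix of Ding and VanderWeele, 2015) that $\RRtrue$ is a convex combination of $\RRtrue_1$ and $\RRtrue_0$, and $\RRadj$ of $\RRadj_1$ and $\RRadj_0$ (a mediant-type identity with weights proportional to $f\E\{Y(0)\mid A=1\}$ and $(1-f)\E\{Y(0)\mid A=0\}$, and their adjusted analogues), so that $\RRadj \geq \RRunadj \geq \RRtrue$ follows from the two subpopulation chains already established. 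You instead bound numerator and denominator separately, showing $\int\mu_1(z)F(\d z) \geq \E(Y\mid A=1) \geq \E\{Y(1)\}$ and $\int\mu_0(z)F(\d z) \leq \E(Y\mid A=0) \leq \E\{Y(0)\}$ via the decomposition $F(\d z) = fF(\d z\mid A=1)+(1-f)F(\d z\mid A=0)$ and the same pairwise mean inequalities. Both arguments are valid; yours is self-contained and avoids the external convex-combination lemma, while the paper's is more modular (any convex combination of quantities that dominate $\RRunadj$ dominates $\RRunadj$, with no need to track which mean inequality feeds which factor). One shared loose end: both treatments tacitly assume every denominator is strictly positive, which is automatic for positive outcomes but can fail for a binary outcome with $\pr(Y=1\mid A=0)=0$; this degenerate case is ignored by the paper as well, so it is not a gap specific to your argument.
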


\begin{proof}
[of Corollary \ref{thm::rr}]
First, $\RRtrue$ is a convex combination of $\RRtrue_1 $ and $ \RRtrue_0$, and $\RRadj$ is a convex combination of $\RRadj_1$ and $\RRadj_0$, which are formally stated in \citet[][eAppendix]{Ding::2015}. Then the conclusion follows from the proofs of the theorems above.
\end{proof}

\subsection*{Appendix 3$\cdot$3.\quad  Average Over Observed Covariates}

In practice, we need to adjust for the observed covariates $X$ that are confounders affecting both the treatment and outcome. The discussion in previous sections is conditional on or within strata of observed covariates $X$, and the causal effects and their estimators are given $X$. For example, 
\begin{eqnarray*}
\ACE^\true(x) &=& \E\{ Y(1)\mid X=x \} - \E\{ Y(0)\mid X=x \},\\
\FACE(x) &=& \E(Y\mid A=1, X=x) - \E(Y\mid A=0, X=x),\\
\ACE^\adj(x)&=&\int \E(Y\mid A=1, Z=z, X=x) F(\d z\mid X=x)  \\
&&- \int \E(Y\mid A=0, Z=z, X=x) F(\d z\mid X=x) ,
\end{eqnarray*}
and other conditional quantities can be analogously defined. If the conditions in the theorems and corollaries in \S\S \ref{sec::scalar} and \ref{sec::general} hold within each level of $X$, then the conclusions in \eqref{eq::result} and \eqref{eq::result-DCE} hold not only within each level of $X$ but also averaged over $X$. For example, for the average causal effects, we have
\begin{eqnarray*}
\begin{pmatrix}
\int \ACE^\adj_1(x) F(\d x\mid A=1) \\
\int \ACE^\adj_0(x) F(\d x\mid A=0)  \\
\int \ACE^\adj (x) F(\d x ) 
\end{pmatrix}
&\geq& 
\begin{pmatrix}
\int \FACE(x) F(\d x\mid A=1) \\
\int \FACE(x) F(\d x\mid A=0)  \\
\int \FACE (x) F(\d x ) 
\end{pmatrix} \\
&\geq& 
\begin{pmatrix}
\int \ACE^\true_1(x) F(\d x\mid A=1) \\
\int \ACE^\true_0(x) F(\d x\mid A=0)  \\
\int \ACE^\true (x) F(\d x ) 
\end{pmatrix} .
\end{eqnarray*}

\end{document}